\newtheorem{theorem}{Theorem}[section]
\newtheorem{lemma}[theorem]{Lemma}
\newtheorem{conjecture}[theorem]{Conjecture}
\newtheorem{construction}[theorem]{Construction}
\newtheorem{claim}{Claim}
\newcommand{\GGr}[1]{{#1}}
\newcommand{\AYr}[1]{{#1}}
\newcommand{\QGr}[1]{{#1}}
\newcommand{\GG}[1]{{#1}}
\newcommand{\QG}[1]{{#1}}
\newcommand{\GGn}[1]{{#1}}
\newcommand{\YZn}[1]{{ #1}}
\newcommand{\QGn}[1]{{#1}}
\newcommand{\YLn}[1]{{#1}}
\newcommand{\wdom}[1]{\rightsquigarrow}
\title{Oriented discrepancy of Hamilton cycles in oriented graphs satisfying Ore-type condition}
\author{Jiangdong Ai\thanks{School of Mathematical Sciences and LPMC, Nankai University. {\tt jd@nankai.edu.cn}.}
\hspace{2mm}
Qiwen Guo\thanks{Department of Computer Science, Royal Holloway University of London. {\tt gqwmath@163.com}.}
\hspace{2mm}
Gregory Gutin\thanks{Department of Computer Science, Royal Holloway University of London, {\tt g.gutin@rhul.ac.uk}, and School of Mathematical Sciences and LPMC, Nankai University.}
\hspace{2mm} Yongxin Lan\thanks{School of Science, Hebei University of Technology, {\tt  2019110@hebut.edu.cn}.}
\\ Qi Shao\thanks{Center for Combinatorics and LPMC, Nankai University. {\tt sq1449682195@163.com}.}
\hspace{2mm} Anders Yeo\thanks{Department of Mathematics and Computer Science, University of Southern Denmark. {\tt andersyeo@gmail.com}, and Department of Mathematics, University of Johannesburg.}
\hspace{2mm} Yacong Zhou\thanks{Shenzhen Institutes of Advanced Technology, Chinese Academy of Sciences. {\tt yacong.zhou96@gmail.com}.}}
\begin{document}

\maketitle

\begin{abstract}
Erd{\H o}s (1963) initiated extensive graph discrepancy research on 2-edge-colored graphs. Gishboliner, Krivelevich, and Michaeli (2023) launched similar research on oriented graphs. They conjectured the following extension of Dirac's theorem: If $D$ is an oriented graph on $n \ge 3$ vertices with minimum degree $\delta (D) \ge n/ 2$, then $D$ contains a Hamilton oriented cycle with at least $\delta(D)$ arcs in the same direction. This conjecture was proved by Freschi and Lo (2024) who posed an open problem to extend their result to an Ore-type condition. We propose two conjectures for such extensions and prove results which provide support to the conjectures.
\end{abstract}

\noindent\textbf{Keywords:} oriented graphs; oriented discrepancy; Hamilton oriented cycles; Ore-type condition
\section{Introduction}

For a hypergraph $\cal H$ and coloring $c: V({\cal H}) \rightarrow \{-1,1\}$, the discrepancy of an edge $e\in E({\cal H})$ is
$D_c(e)=|\sum_{v\in e} c(v)|.$ Combinatorial discrepancy studies $$\min_c \max_{e\in E({\cal H})}D_c(e),$$ the {\em discrepancy} of $\cal H$. For an excellent exposition of the topic, see e.g. \cite{M99}. Erd{\H o}s \cite{E63} launched an extensive study of graph discrepancy, which is a special case of combinatorial discrepancy: $V({\cal H})$ is the edge set of some graph $G$ and $E({\cal H})$ consists of the edge sets of specific subgraphs of $G.$ For recent papers on the topic, see e.g. \cite{BCJP,BCPT,FHLT,GKM22a,GKM22b}. 


Gishboliner, Krivelevich, and Michaeli \cite{GKM23} introduced \GGr{an} oriented version of graph discrepancy and conjectured the \GGr{statement} of Theorem~\ref{Dirac-type theorem}, stated below, \GGr{which was} proved by Freschi and Lo \cite{FL24}.

Let us now introduce some terminology and notation; terminology and notation not introduced in this paper can be found in \cite{BG00,BG18,BM08}.
The {\em underlying graph} of a digraph $D$ is an undirected \GG{multigraph $U(D)$ obtained from $D$ by removing the orientations of all arcs in $D$.} \QG{The {\em degree} of a vertex $x$ in a digraph $D$, denoted by $d_D(x)$, is the degree $d_{U(D)}(x)$ of $x$ in $U(D)$. }In what follows, we will often omit directed or undirected graph subscripts if such graphs are clear from the context. The \emph{minimum degree} of a vertex in a directed or undirected graph $H$ is denoted by $\delta(H)$. 

 The \emph{neighborhood} $N(x)$ of a vertex $x$ in a directed or undirected graph $D$ is the set of vertices adjacent to $v$. The \emph{closed neighborhood} of $v$ is $N[x]=N(x)\cup \{x\}$. \QGr{We use $K_n$ to denote the complete undirected graph of order $n$.  For an undirected graph $G$, let $\overline{G}$ denote its complement. }

A digraph $D$ is an {\em oriented graph} if there is no more than one arc between any pair of vertices in $D$. \GG{An {\em oriented cycle} ({\em oriented path}, respectively) in $D$ is a subdigraph $Q$ of $D$ such that $U(Q)$ is a cycle (path, respectively) in $U(D)$.}
 An oriented cycle (oriented path, respectively) in $D$ is \emph{Hamilton} if it is a spanning subdigraph of $D$. Note that a digraph $D$ has a Hamilton oriented cycle (path, respectively) if and only if $U(D)$ has a Hamilton cycle (path, respectively). 

For an oriented cycle  $C=x_1x_2\dots x_tx_1$, the arc $a$ between $x_i$ and $x_{i+1}$ (all subscripts are taken modulo $t$) on $C$ is \emph{forward} if $a=x_ix_{i+1}$ and  \emph{backward} if $a=x_{i+1}x_{i}$.  \GGr{(In some parts of the literature (e.g.,~\cite{GKM23}), an arc from $x$ to $y$ is written as $(x,y)$ and an edge between $x$ and $y$ as $\{x,y\}$. Following \cite{FL24}, we instead use the shorthand $xy$ for both $(x,y)$ and $\{x,y\}$ whenever the meaning is clear from context. This convention is also consistent with the standard notation for directed and undirected paths and cycles.)}
The number of forward (backward, respectively) arcs of $C$ is denoted by $\sigma^+(C)$ ($\sigma^-(C)$, respectively).
Let $\sigma_{\min}(C)=\min \left \{\sigma^+(C), \sigma^-(C)\right \}$ and $\sigma_{\max}(C)=\max \left \{\sigma^+(C),\sigma^-(C)\right \}$. Note that $\sigma_{\min}(C)$ and $\sigma_{\max}(C)$  do not depend on the order of the vertices of $C$. If all arcs of $C$ are forward, then $C$ is a {\em directed cycle}. Similar definitions and notation can be introduced for oriented paths.


\begin{theorem}[\cite{FL24}]\label{Dirac-type theorem}
Let $D$ be an oriented graph on $n\ge 3$ vertices. If $\delta(D) \ge \frac{n}{2}$, then there exists a Hamilton oriented cycle $C$ in $D$ such that $\sigma_{\max}(C)\ge \delta(D)$.
\end{theorem}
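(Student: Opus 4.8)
The plan is to combine Dirac's theorem with a rotation (segment-reversal) argument applied to a carefully chosen extremal Hamilton oriented cycle, ending with a neighbourhood-counting contradiction.

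First, since $\delta(U(D)) = \delta(D) \ge n/2$, Dirac's theorem gives a Hamilton cycle in $U(D)$, equivalently a Hamilton oriented cycle in $D$. Among all Hamilton oriented cycles of $D$ choose $C$ with $\sigma_{\max}(C)$ as large as possible, and, among those, one with the fewest direction changes (equivalently, fewest maximal directed runs); write $C = x_1x_2\cdots x_nx_1$ and fix the traversal so that $\sigma^+(C) = \sigma_{\max}(C)$. Suppose for contradiction that $\sigma^+(C) \le \delta - 1$. Then $\sigma^-(C) = n - \sigma^+(C) \ge n - \delta + 1 \ge 1$, and since $\sigma^+(C) \ge \sigma^-(C)$ also $\sigma^+(C) \ge \lceil n/2 \rceil$, so in fact $\delta \ge \lceil n/2 \rceil + 1$ — there is a genuine gap to exploit. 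Decompose $C$ into maximal directed runs; since the arc signs alternate between $+$-runs and $-$-runs there are equally many of each, say $t$ of each, with $2t$ extreme vertices alternating between local sinks (in-degree $2$ on $C$) and local sources (out-degree $2$ on $C$), and $t \ge 1$.

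The workhorse is the effect of reversing a segment. Reversing the piece $x_i,x_{i+1},\dots,x_j$ of $C$ (with $i<j$ and the reversal non-degenerate) is possible exactly when the chords $x_{i-1}x_j$ and $x_ix_{j+1}$ both lie in $D$, and yields a Hamilton oriented cycle $C'$ with $\sigma^+(C') = \sigma^+(C) - f + (q-p) + f'$, where among the arcs strictly inside the reversed piece $p$ are forward and $q$ backward in $C$, $f\in\{0,1,2\}$ counts the forward arcs among the two replaced flanking arcs (positions $i-1$ and $j$), and $f'\in\{0,1,2\}$ counts the forward ones among the two new chords in $C'$ — the point being that the interior arcs merely flip orientation. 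Applying this to a $-$-run (so $p=0$, $q$ equals the run length $\ell\ge 1$, and $f=2$ since a maximal $-$-run is flanked by forward arcs) and using maximality of $\sigma_{\max}(C)$: whenever those two chords exist we must have $\ell + f' \le 2$. Hence every $-$-run has length at most $2$, and a $-$-run of length $2$ forces the two corresponding chords, when present, to become backward in $C'$. The minimality of the number of runs lets us additionally rule out "neutral" reversals (those with $\sigma^+(C')=\sigma^+(C)$) that would lower the run count, tightening the constraints further; and $\sigma^-(C)\ge n-\delta+1$ together with the length bound forces $t$ to be large, so there are many extreme vertices.

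The crux is then an Ore / Bondy–Chvátal-type contradiction. Fix a local sink $x_i$: its $\ge \delta$ neighbours, two of which are its cycle-neighbours, give $\ge\delta-2$ chords at $x_i$, and for each such chord $x_ix_m$ one attempts a segment reversal anchored at $x_i$ (and, as needed, at $x_{i-1}$ or at the source terminating the short $-$-run through $x_i$), locating a "crossing" partner chord inside the relevant neighbourhood. Because $2\delta\ge n$, the neighbourhood sets in play are large enough to intersect, so such a reversal does exist; the structural constraints above then say it cannot increase $\sigma^+$, which pins down the orientations of many chords incident to the extreme vertices, and feeding this back into the degree counts at the local sinks and sources contradicts $\delta\ge n/2$. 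The main obstacle — and where the real work lies — is exactly this orientation bookkeeping: a single segment reversal flips only the interior arcs and leaves the new boundary chords' orientations uncontrolled, so one must chain several reversals, choose anchors away from run boundaries, or isolate by hand the stubborn configuration of many length-$1$ $-$-runs (where an improving reversal requires both new boundary chords to be forward). Dispatching that configuration, together with the small-$n$ cases in which reversals degenerate, completes the argument.
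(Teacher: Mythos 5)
This statement is Theorem~\ref{Dirac-type theorem}, which the paper does not prove: it is imported verbatim from Freschi and Lo \cite{FL24}, so there is no in-paper proof to compare against. Judged on its own merits, your proposal is an outline rather than a proof, and the outline has genuine gaps at exactly the points where the theorem is hard.

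Concretely: (i) Your conclusion ``hence every $-$-run has length at most $2$'' does not follow from the displayed reversal computation, because that computation is conditional on the two chords $x_{i-1}x_{j+1}$ and $x_ix_{j+2}$ being present in $U(D)$, and nothing in the argument guarantees they are; with $\delta\ge n/2$ a given vertex can still miss up to $n/2-1$ others, so a particular pair of chords can easily be absent. (ii) The entire final paragraph --- the ``Ore / Bondy--Chv\'atal-type contradiction'' --- is a plan, not an argument: you say one ``attempts'' reversals, that the constraints ``pin down the orientations of many chords,'' and that ``dispatching that configuration \dots completes the argument,'' while simultaneously (and correctly) identifying the fatal obstacle, namely that a segment reversal leaves the orientations of the two new boundary chords completely uncontrolled. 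The standard crossing-chords counting that proves Dirac/Ore only needs the chords to \emph{exist}; here you need them to exist \emph{and} to be favourably oriented, and no counting in the proposal delivers that. This is precisely why the statement was a conjecture of Gishboliner, Krivelevich and Michaeli and required the substantially more involved argument of \cite{FL24}; the missing bookkeeping is the theorem, not a routine verification. (A smaller quibble: the assertion that minimizing the number of runs ``rules out neutral reversals'' also needs justification, since a neutral reversal need not reduce the run count.)
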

Note that Theorem  \ref{Dirac-type theorem}  is a strengthening of Dirac's theorem \cite{D52}: a graph $G$ on $n\ge 3$ vertices has a Hamilton cycle if $\delta(G)\ge n/2.$ Ore \cite{O60} extended Dirac's theorem as follows: a graph $G$ on $n\ge 3$ vertices has a Hamilton cycle, if $d(u)+d(v)\ge n$ for every pair of non-adjacent vertices $u$ and $v$  of $G$. In a digraph $D$, a pair of vertices are {\em non-adjacent} if there is no arc between them.


Freschi and Lo \cite{FL24} stated an open problem to extend their result to an Ore-type condition, \GGr{i.e.,  a lower bound on the sum of degrees of any pair of non-adjacent vertices.}
Now we pose two conjectures on the open problem. While we were unable to prove either conjecture, we show some results providing support to the conjectures.

By considering the minimum degree of $D$, we may extend Theorem \ref{Dirac-type theorem} to the following conjecture, which is an analogue of
Theorem \ref{Dirac-type theorem} with an Ore-type condition.




\begin{conjecture}\label{conj:1}
Let $D$ be an oriented graph on $n \ge 3$ vertices with minimum degree $\delta$. If $d(u)+d(v)\ge n$ for each pair of non-adjacent vertices $u$ and $v$, then there exists a Hamilton oriented cycle $C$ in $D$ such that $\sigma_{\max}(C) \ge \max\{\delta,n-\delta\}$.

\end{conjecture}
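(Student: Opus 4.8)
The plan is to adapt the rotation–extension (absorption-free) machinery used by Freschi and Lo for Theorem~\ref{Dirac-type theorem} to the Ore setting, since an Ore-type condition $d(u)+d(v)\ge n$ guarantees that $U(D)$ is hamiltonian by Ore's theorem, so a Hamilton oriented cycle always exists; the only issue is to control $\sigma_{\max}$. First I would fix a Hamilton oriented cycle $C=v_1v_2\cdots v_nv_1$ maximizing $\sigma_{\max}(C)$ (say with $\sigma^+(C)=\sigma_{\max}(C)$), and suppose for contradiction that $\sigma^+(C)\le \max\{\delta,n-\delta\}-1$. I would then study local ``switches'': given a vertex $v_i$, look at the arcs between $v_i$ and the vertices $v_{i-1},v_{i+1}$ it is \emph{not} joined to on $C$, and at chords of $C$, and show that whenever $d(v_i)$ is large one can reroute a short segment of $C$ to strictly increase the number of forward arcs without decreasing it elsewhere, contradicting maximality. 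The Ore condition feeds in exactly at the step where $v_i$ has small degree: then its non-neighbour mate $v_j$ has degree $\ge n-d(v_i)$, and one performs the rerouting around $v_j$ instead.

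Concretely, the key steps in order are: (i) reduce to the case $\delta\ge n/2$ vs.\ $\delta< n/2$ — if $\delta\ge n/2$ then $\max\{\delta,n-\delta\}=\delta$ and we want to recover (a statement close to) Theorem~\ref{Dirac-type theorem}, but under the weaker Ore hypothesis, whereas if $\delta<n/2$ the target $n-\delta$ is what a low-degree vertex ``costs'' us and should be the easier regime; (ii) set up a discharging / counting argument over the $n$ positions of $C$, assigning to position $i$ a weight measuring how ``locally backward'' $C$ is at $v_i$, and show the total weight is forced to be small when $\sigma^+(C)$ is small; (iii) for each position where the local structure is bad, exhibit an explicit oriented-cycle modification (reversing a segment, or swapping in a chord $v_iv_j$ or $v_jv_i$ and deleting a $C$-arc) that increases $\sigma^+$; (iv) use the Ore condition to guarantee that for every non-adjacent pair at least one endpoint has degree $\ge n/2$, so the ``good endpoint'' always supplies enough arcs to run the modification; (v) assemble these into a contradiction with the choice of $C$. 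It is probably cleanest to phrase (iii) via the standard observation that if $v_iv_{i+1}$ is a backward arc and there is a forward-compatible chord then $\sigma^+$ can be increased.

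I also expect that one should separately handle the path version first or in parallel, since Hamilton oriented paths are easier to reroute (no wrap-around constraint), and then close up a best path into a cycle using two vertices of high degree; this mirrors how Dirac/Ole-type arguments typically treat paths as a warm-up. A complementary, possibly more robust route is to prove the cycle statement by a \emph{global} exchange argument: take $C$ with $\sigma^+(C)$ maximum, orient a notion of ``potential'' $\pot$ counting forward arcs, and show that if $\sigma^+(C)<\max\{\delta,n-\delta\}$ then the set $S$ of vertices $v_i$ with $v_iv_{i+1}$ backward is large, its neighbourhood in $C$ behaves like in the rotation lemma, and Ore's degree sum on a carefully chosen non-adjacent pair inside $S$ produces a crossing pair of chords that can be rerouted to gain a forward arc.

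The hard part will be step (iii)–(iv): making the local rerouting \emph{monotone}, i.e.\ guaranteeing that fixing the bad spot at $v_i$ (or at its Ore-mate $v_j$) does not destroy forward arcs elsewhere on $C$, and doing so uniformly for both the $\delta\ge n/2$ and $\delta<n/2$ regimes so that the single bound $\max\{\delta,n-\delta\}$ comes out. In the Dirac case Freschi and Lo already had to do delicate segment-reversal bookkeeping; under Ore the degrees are unbalanced, so the segments one reroutes may be long, and controlling the net change in $\sigma^+$ over a long reversed segment (which flips \emph{every} arc on it) is the genuine obstacle — one likely needs to choose the segment so that it contains strictly more backward than forward arcs of $C$, and proving such a segment exists whenever $\sigma^+(C)$ is below target is where the Ore hypothesis must be used most carefully. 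If that monotonicity cannot be arranged directly, the fallback is to prove the weaker bound $\sigma_{\max}(C)\ge \min\{\delta,n-\delta\}$ or $\sigma_{\max}(C)\ge n/2$ unconditionally and present it as partial support for Conjecture~\ref{conj:1}, which is consistent with the paper's stated aim of providing evidence rather than a full proof.
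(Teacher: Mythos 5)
There is a genuine gap here, and it is worth being explicit about its nature: the statement you are asked to prove is Conjecture~\ref{conj:1}, which the paper itself leaves open --- the authors state explicitly that they were unable to prove it and only offer partial results in its support (Theorem~\ref{thm: degree sum}, which requires the strictly stronger hypothesis $d(u)+d(v)\ge n+\delta-2$, and Theorem~\ref{clique-N(v)}, which assumes $D[N(v_0)]$ is a tournament and only reaches $n-\delta-2$). Your submission is a research plan rather than a proof: none of the steps (i)--(v) is actually carried out, and you yourself identify the decisive obstruction in steps (iii)--(iv) --- making the segment rerouting \emph{monotone} in $\sigma^+$ --- without resolving it. Reversing a segment flips every arc on it, so to gain forward arcs you must find a segment with a strict excess of backward arcs whose endpoints admit the required chords; nothing in your sketch shows such a segment exists when $\sigma^+(C)=\max\{\delta,n-\delta\}-1$, and the Ore condition is only invoked at the level of ``the good endpoint supplies enough arcs'' without a concrete counting argument. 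The proposed fallback (proving $\sigma_{\max}(C)\ge\min\{\delta,n-\delta\}$ or $\ge n/2$) is a different, weaker statement and does not establish the conjecture.

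A secondary concern: in the regime $\delta<n/2$ the target $n-\delta$ is \emph{tight} (Construction~\ref{construct:1} shows an independent set $A$ of size $\delta$ forces $\sigma_{\min}(C)\ge\delta$ for every Hamilton oriented cycle), so any successful argument must be sharp to the last arc; a generic discharging or rotation scheme that loses even one forward arc per bad position will not close. This is precisely why the paper's partial result, Theorem~\ref{thm: degree sum}, buys the extra slack by strengthening the degree-sum hypothesis to $n+\delta-2$: it isolates a low-degree vertex $v_0$, observes that all vertices outside $N[v_0]$ then have degree $\ge n-2$ (so $D-N[v_0]$ is a tournament), builds a nearly-directed Hamilton cycle there via Theorem~\ref{Dirac-type theorem}, and inserts the remaining vertices by Lemma~\ref{lem:add x between pair} without losing forward arcs. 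If you want to produce a publishable partial result rather than attack the full conjecture, that decomposition (low-degree vertex, tournament on the rest, lossless insertion) is a more tractable template than global rotation--extension.
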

When $\delta\ge n/2$, Conjecture~\ref{conj:1} holds and is sharp, as it directly follows from Theorem~\ref{Dirac-type theorem}. The following construction shows that Conjecture~\ref{conj:1} is sharp when $\delta < n/2$, as $\sigma_{\max}(C) \leq  n-\delta$ for all Hamilton oriented cycles $C$ in the construction below.

\begin{construction}\label{construct:1}
Given integers $n$ and $\delta$ with $\delta\geq 2 $ and $n>2\delta$, let $G$ be a graph on $n$ vertices with vertex set $V(G)=A\cup B \cup \{v_0\}$, where $G[A]$ is isomorphic to $\overline{K}_{\delta}$, $G[B]$ is isomorphic to $K_{n-\delta-1}$, $N(v_0)=A $, every vertex in $A$ is adjacent to all vertices in $B$. Let $D$ be an orientation of $G$, such that every edge between $A$ and $B$ is oriented from $B$ to $A$, every edge between $v_0$ and $A$ is oriented from $v_0$ to $A$ (see Figure \ref{fig:teforc1}) and the orientation of every edge in $G[B]$ is arbitrary.
\end{construction}

\QGr{It is not hard to check that $d(u)+d(v)\ge n$ for each pair of non-adjacent vertices $u$ and $v$ in $D$ and $\delta(D)=\delta$.} Let $C$ be a Hamilton oriented cycle in $D$. Observe that every vertex in $A$ is incident to two edges of $C$ which are oriented in opposite directions. Since $A$ is an independent set, $\sigma_{\min}(C)\ge |A|=\delta$. It follows that $\sigma_{\max}(C)\le n-\delta$.

\begin{figure}[H]
\centering
		\tikzstyle{vertexY}=[circle,draw, top color=gray!5, bottom color=gray!30, minimum size=8pt, scale=0.6, inner sep=1pt]
		\begin{tikzpicture}[scale=1.5]
			
			\draw (2,0) ellipse (0.8 and 1);
			\draw (0,0) ellipse (0.5 and 0.6);
			\draw [thick, arrows = {-Stealth[reversed, reversed]}] (1.3,0.5)--(0.4,0.3);
			\draw [thick, arrows = {-Stealth[reversed, reversed]}] (1.3,-0.5)--(0.4,-0.3);
			\draw [thick, arrows = {-Stealth[reversed, reversed]}] (1.2,0)--(0.5,0);
			\draw (0,0) node {$\overline{K}_{\delta}$};
			\draw (2,0) node {$K_{n-\delta-1}$};
                \draw (0,-1.5) node {$A$};
                \draw (2,-1.5) node {$B$};
                \fill (-2,0) circle (.05);
                \draw (-2,0) node [left]{$v_0$};
                \draw [thick, arrows = {-Stealth[reversed, reversed]}] (-2,0)--(-0.5,0);
			\draw [thick, arrows = {-Stealth[reversed, reversed]}] (-2,0)--(-0.4,0.4);
			\draw [thick, arrows = {-Stealth[reversed, reversed]}] (-2,0)--(-0.4,-0.4);
			
		\end{tikzpicture}\hfill
        \caption{\QGn{Tight examples for} \YZn{Conjecture \ref{conj:1}}\label{fig:teforc1}}
\end{figure}
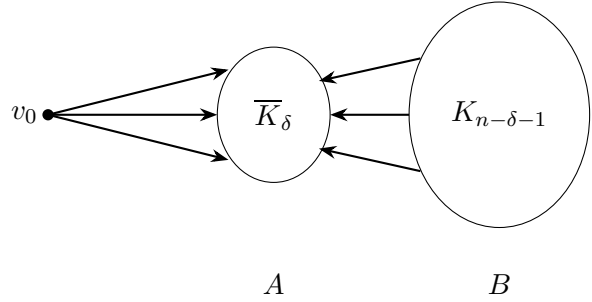

When we strengthen the degree sum condition in Conjecture \ref{conj:1}, we deduce the following result which achieves the desired bound stated in the conjecture.

	\QG{\begin{theorem}\label{thm: degree sum}
		Let $D$ be an oriented graph on $n$ vertices with minimum degree $1<\delta<\frac{n}{2}$. If $d(u)+d(v)\ge n+\delta-2$ for each pair of non-adjacent vertices $u$ and $v$, then there exists a Hamilton oriented cycle $C$ in $D$ such that $\sigma_{\max}(C) \ge n-\delta$.
	\end{theorem}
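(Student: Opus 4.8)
The plan is to start from the classical Ore/Bondy--Chvátal machinery on the underlying graph $U(D)$ and then route the orientations. Since $d(u)+d(v)\ge n+\delta-2 \ge n$ for every pair of non-adjacent vertices (using $\delta>1$), Ore's theorem guarantees that $U(D)$ is hamiltonian, so $D$ has a Hamilton oriented cycle. The real work is to produce one whose majority orientation is large, namely at least $n-\delta$ forward arcs. I would fix a Hamilton cycle $H=v_1v_2\cdots v_nv_1$ in $U(D)$ and consider the two ways of traversing it; a counting/averaging argument immediately gives a Hamilton oriented cycle with $\sigma_{\max}\ge \lceil m/2\rceil$ where $m$ is the number of arcs lying on $H$, but this is too weak when $D$ has many "symmetric" (digon) pairs on $H$. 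So the key idea must be to \emph{choose} the Hamilton cycle in $U(D)$ carefully, using the strengthened degree condition to force the cycle through enough arcs that are oriented a single way.

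Concretely, I would argue as follows. Let $x$ be a vertex of minimum degree $\delta$. The strengthened hypothesis says every pair of non-adjacent vertices has degree sum at least $n+\delta-2$, i.e.\ $s^*(D)\ge \delta-2$ in the notation introduced earlier; equivalently, the non-neighbourhood of any vertex is small and densely connected. I would first handle the vertex $x$ and its at most $n-1-\delta$ non-neighbours: for any two non-neighbours $u,v$ of $x$, we have $d(u)+d(v)\ge n+\delta-2$, which is close to $2\cdot\frac{n}{2}$ shifted up by $\delta-2$, forcing $u,v$ to have large degree and in particular to have a common neighbour structure that lets us build long directed sub-paths avoiding backward arcs. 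The target $n-\delta$ strongly suggests that the $\delta$ ``allowed'' backward arcs should be spent exactly on edges incident to the low-degree region near $x$. So I would try to construct a Hamilton oriented cycle that is a directed cycle except for a controlled set of at most $\delta$ arcs, those being the edges incident to $x$ and its non-neighbours.

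The cleanest route is probably a Bondy--Chvátal-style closure argument performed \emph{in the symmetric $(0,1)$-digraph $\hat D$}: repeatedly, if $u,v$ are non-adjacent in $U(D)$ with $d(u)+d(v)\ge n+\delta-2$, I would add a ``virtual'' cheap structure and show that a Hamilton cycle of the closure with few backward arcs can be pulled back to $D$ with no more backward arcs created than the slack $\delta-2$ permits; each insertion step trades one potential backward arc for the degree surplus. Running this to the complete closure (which is a tournament-like object on which finding a good Hamilton oriented cycle is easy, as noted for semicomplete digraphs), and then tracking how many backward arcs each uninsertion can force, should yield the bound $\sigma_{\max}(C)\ge n-\delta$.

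I expect the main obstacle to be the bookkeeping in the reverse (un-closure) step: showing that reinserting a removed edge into an oriented Hamilton cycle never costs more than the ``budget'' of backward arcs, and in particular that the budget $\delta$ is exactly the right amount and is never overspent even when many non-adjacent pairs interact. A naive rotation/extension argument can blow the budget by reorienting a long arc of the cycle; the delicate point is to do the reinsertion \emph{locally}, reorienting at most two incident arcs, which is where the extra $\delta-2$ in the degree sum (as opposed to just $n$) must be used to guarantee enough local freedom. If this local-reinsertion lemma can be established, the rest is the standard closure induction plus the easy semicomplete base case.
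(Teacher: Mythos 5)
Your proposal is a plan rather than a proof, and the step you yourself flag as the ``main obstacle'' --- the local re-insertion lemma in the Bondy--Chv\'atal un-closure --- is exactly where the argument is missing, not merely delicate. In the undirected closure the standard way to reinsert a deleted edge $uv$ is the crossing-chords argument: one finds $i$ with $uv_{i}, vv_{i-1}\in E$ and re-routes the cycle, which reverses the direction of traversal of an entire segment. In the oriented setting this can flip $\Theta(n)$ arcs from forward to backward, so the ``budget of $\delta$ backward arcs'' is not respected, and you give no mechanism for making the operation local. Nothing in the hypothesis obviously supplies one, so as written the closure route does not go through.

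The paper's proof rests on a structural observation that your sketch never reaches, and which makes the whole problem much more rigid than a closure argument suggests. Let $v_0$ be a vertex of degree $\delta$ and $U=V(D)\setminus N[v_0]$, so $|U|=n-\delta-1$. Every $u\in U$ is non-adjacent to $v_0$, so the hypothesis gives $d(u)\ge (n+\delta-2)-\delta=n-2$; hence every vertex of $U$ is adjacent to everything except $v_0$, and in particular $D[U]$ is a tournament. One then applies the Freschi--Lo theorem (Theorem~\ref{Dirac-type theorem}) to $D[U]$ to get a Hamilton oriented cycle $C_0$ of $U$ with $\sigma_{\max}(C_0)\ge n-\delta-2$, proves by a case analysis that the path $v_1v_0v_2$ (for two neighbours $v_1,v_2$ of $v_0$) can be spliced into some edge of $C_0$ gaining at least two forward arcs, and finally inserts the remaining neighbours $v_3,\dots,v_\delta$ one per edge of the long arc of the cycle using Lemma~\ref{lem:add x between pair}, which never decreases the forward count. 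You would need to either discover this tournament structure (which bypasses the closure entirely) or actually prove your local re-insertion lemma; without one of these, the proposal has a genuine gap.
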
}

Observe that Theorem \ref{thm: degree sum} implies that Conjecture \ref{conj:1} is true for $\delta=2$, \QGr{since in this case the lower bound of degree sum of the condition in  Theorem \ref{thm: degree sum} is precisely $n$.}

When \YLn{the subgraph induced by} the neighborhood of a vertex with the minimum degree in the graph is isomorphic to a tournament, we can derive a slightly weaker bound than in Conjecture \ref{conj:1}.

\begin{theorem}\label{clique-N(v)}
    Let $D$ be an oriented graph on $n$ vertices with minimum degree $3 \le \delta \le \frac{n}{3}$, which satisfies \QGr{that $d(u)+d(v)\ge n$ for each pair of non-adjacent vertices $u$ and $v$ in $D$}. If there is a vertex $v_0 \in V(D)$ such that $d(v_0)=\delta$ and $D[N(v_0)]$ is isomorphic to a tournament, then there is a Hamilton oriented cycle $C$ in $D$ such that \QG{$\sigma_{\max}(C) \ge n-\delta-2$}.
\end{theorem}


For every $n$-vertex oriented graph $D$, we define $$s^*(D)=\min\{d(u)+d(v)-n: u\ne v\in V(D), \{uv,vu\} \cap A(D)=\emptyset\}$$
if there is a pair of non-adjacent vertices in $D$, and $s^*(D)=n-2$ if $D$ is a tournament. The following conjecture is another possible generalization of Theorem \ref{Dirac-type theorem}. 

	\begin{conjecture}\label{conj:2}
		Let $D$ be an oriented graph on \GGn{$n\ge 3$} vertices. If $s^*(D)\geq 0$, then there is a Hamilton oriented cycle $C$ in $D$ such that $\sigma_{\max}(C)\geq \lceil\frac{n+s^*(D)}{2}\rceil$.
	\end{conjecture}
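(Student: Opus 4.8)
The plan is to start from a Hamilton oriented cycle of $D$ with the largest possible value of $\sigma_{\max}$, assume it falls short of $\lceil\tfrac{n+s^*(D)}{2}\rceil$, and reach a contradiction by local surgery. Since $s^*(D)\ge 0$, the Ore condition $d(u)+d(v)\ge n$ holds for every pair of non-adjacent vertices of the underlying graph $U(D)$, so by Ore's theorem \cite{O60} $U(D)$ is hamiltonian and hence $D$ has a Hamilton oriented cycle. If $D$ is a tournament then $s^*(D)=n-2$, the required bound is $\sigma_{\max}(C)\ge n-1$, and $\delta(D)=n-1\ge n/2$, so the statement is exactly Theorem~\ref{Dirac-type theorem}; we may therefore assume $D$ is not a tournament. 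Then a minimum-degree vertex $v$ has a non-neighbour (as $\delta(D)\le n-2$), and applying the definition of $s^*(D)$ to that non-adjacent pair gives $\delta(D)\ge s^*(D)+1$, a fact we will use repeatedly.

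Fix a Hamilton oriented cycle $C=v_1v_2\cdots v_nv_1$ of $D$ maximising $\sigma_{\max}(C)$, labelled so that $\sigma^+(C)=\sigma_{\max}(C)=:m$, and set $b:=\sigma^-(C)=n-m$. Because $\sigma_{\max}\ge n/2$ holds for \emph{every} Hamilton oriented cycle, the real content of the conjecture is to gain about $s^*(D)/2$ forward arcs above this trivial bound, so we assume for contradiction that $m\le \lceil\tfrac{n+s^*(D)}{2}\rceil-1$; equivalently $b\ge \lfloor\tfrac{n-s^*(D)}{2}\rfloor+1$, which together with $b\le n/2$ pins $b$ just below $n/2$ and forces the cycle discrepancy $\sigma^+(C)-\sigma^-(C)=n-2b$ to be strictly less than $s^*(D)$. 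Let $k$ be the number of maximal runs of consecutive backward arcs around $C$; then $k$ also equals the number of maximal forward runs and the number of ``sinks'' of $C$ (equivalently the number of sources), and $1\le k\le b$. The inequality above says the backward arcs are so numerous that they break into many short runs, i.e.\ $k$ is close to $b$ and $C$ carries many sink/source pairs.

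The basic operation is \emph{segment reversal}: if $v_p,\dots,v_q$ carries a maximal backward run (so $v_{p-1}v_p$ and $v_qv_{q+1}$ are forward arcs) and both $\{v_{p-1},v_q\}$ and $\{v_p,v_{q+1}\}$ are arcs of $D$, then reversing $v_p\cdots v_q$ yields a Hamilton oriented cycle in which the $q-p$ arcs of the run turn forward, two forward arcs are lost, and two arcs of unknown orientation are gained, a net change of at least $(q-p)-2$ in $\sigma^+$; thus a run of length at least $3$ whose two boundary chords are present already contradicts maximality of $C$, and if a boundary chord is missing, its endpoints are non-adjacent, so the $s^*$-surplus together with $\delta(D)\ge s^*(D)+1$ should let a counting argument exhibit an alternative move deeper inside the run. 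The short-run regime needs a different attack: one first observes that $s^*(D)\ge 1$ already forces $D$ to contain nontrivial directed structure (for instance a directed path $xyz$, since otherwise $D$ embeds into a complete-bipartite orientation and two non-adjacent vertices on the source side would give $s^*(D)\le 0$), and the programme is to thread such consistently oriented directed subpaths into a Hamilton cycle of $U(D)$ by Ore-type rotation-and-insertion, with the accumulated $s^*$-surplus bounding how often that insertion can be blocked.

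The step I expect to be the main obstacle — and the reason the statement is only conjectured — is precisely the near-alternating regime $k\approx b\approx n/2$, where $C$ looks like $v_1{\to}v_2{\leftarrow}v_3{\to}v_4{\leftarrow}\cdots$. There an isolated backward arc $v_{i-1}{\to}v_i{\leftarrow}v_{i+1}$ resists every single local surgery, because any re-attachment may itself create two fresh backward arcs, so $b$ need not decrease monotonically under the moves above; and, worse, the Ore-type hypothesis constrains only non-adjacent pairs, whereas the sink and source vertices used in the surgeries may be pairwise adjacent, giving no direct control over the orientations of the chords one relies on. Overcoming this seems to require a genuinely global device: a potential function that is monotone under a well-chosen \emph{combination} of reversals and insertions, an absorption-style reservoir of forward structure able to soak up the scattered backward arcs, or a reduction that first settles Conjecture~\ref{conj:1} and bootstraps from it — with the sharpness examples (presented separately) indicating which families of moves are unavoidable. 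For the special classes treated in this paper, semicomplete multipartite and locally semicomplete digraphs, the extra structure sidesteps this obstacle and the conjecture can be confirmed, which is the support we are able to offer.
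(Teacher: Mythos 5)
This statement is Conjecture~\ref{conj:2}, which the paper explicitly leaves open (``While we were unable to prove either conjecture\dots''), so there is no proof in the paper to compare against. Your proposal is likewise not a proof, and to your credit you say so. The preliminary reductions you do carry out are correct: $s^*(D)\ge 0$ gives the Ore condition on $U(D)$, hence a Hamilton oriented cycle exists; the tournament case has $s^*(D)=n-2$ and target $n-1=\delta(D)$, so it follows from Theorem~\ref{Dirac-type theorem}; and $\delta(D)\ge s^*(D)+1$ when $D$ is not a tournament. But everything after that is conditional. The segment-reversal step is never actually established --- when a boundary chord of a maximal backward run is absent you invoke ``a counting argument'' that is not supplied --- and in the near-alternating regime $b\approx n/2$ you concede outright that no local surgery works and that ``a genuinely global device'' is needed. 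That concession is precisely the missing idea; identifying the obstacle is not the same as overcoming it. So the proposal has a genuine, acknowledged gap and does not establish the statement.

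For comparison, the paper's supporting evidence takes a different route from your local-surgery plan: a tightness construction in Subsection~\ref{sec:eg-tightness} showing $\lceil\frac{n+s^*(D)}{2}\rceil$ cannot be improved, and Theorem~\ref{thm:approx}, an approximate version proved by combining an edge-counting bound (Lemma~\ref{lem:1}), the extraction of $k$ vertex-disjoint good diamonds (Lemma~\ref{lem:2}, from \cite{GKM23}), and a P\'osa-type lemma (Lemma~\ref{lem:3}) that forces a Hamilton cycle through a prescribed path forest. That argument is a complete proof of a weaker statement (it needs $s^*(D)\ge 8k$ to gain $k$ forward arcs, losing a factor of $8$ against the conjectured bound), whereas your plan targets the full bound and stalls exactly where you say it does. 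Finally, your closing claim that the conjecture ``can be confirmed'' for semicomplete multipartite and locally semicomplete digraphs overstates the paper's results: Theorems~\ref{mthop}, \ref{mthoc} and \ref{thm: hoc in LSD} characterize $\sigma^{hc}_{\max}$ in those classes (which need not even be oriented graphs); they do not verify Conjecture~\ref{conj:2} on a subclass of its hypotheses.
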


	If Conjecture \ref{conj:2} is true, it also implies Theorem \ref{Dirac-type theorem}. Indeed, when $\delta(D) \ge n/2$, since $s^*(D)\geq 2\delta(D)-n$, Conjecture \ref{conj:2} implies that $\sigma_{\max}(C)\geq n/2 + (2\delta(D)-n)/2=\delta(D)$. 

     The following \YLn{construction shows} the bound in Conjecture \ref{conj:2} is tight for every positive integer $n$ and integer $0\le s^*(D) \le n-4$ (note that if $ s^*(D)\ge n-3$ then we are considering tournaments and the bounds are clearly tight for them).
   \QG{\begin{construction}
        Given a positive integer $n$ and a non-negative integer $k$ with  $n\ge k+4$, let \QGr{$c \in \{0, 1\}$ with $c\equiv n+k \pmod{2}$} . Let $G$ be a graph on $n$ vertices with vertex set $V(G)=A\cup B$, where $G[A]$ is isomorphic to \QGr{$\overline{K}_{\frac{n-k-c}{2}}$}, $G[B]$ is isomorphic to \QGr{$K_{\frac{n+k+c}{2}} $}, and all possible edges with one end-vertex in $A$ and the other in $B$ exist except \QGr{one edge if $c=1$.} Let $D$ be an orientation of $G$, such that every edge between $A$ and $B$ is oriented from $B$ to $A$, and the orientation of every edge in $G[B]$ is arbitrary (see Figure \ref{fig:teforc2}).
   \end{construction}}

        It is not hard to check that $s^*(D)=k$, and for every Hamilton oriented cycle $C$ in $D$, \QGr{every vertex in $A$ is incident to two edges of $C$ which are oriented in opposite directions, and since $A$ is an independent set,} we have $\sigma_{\max}(C)\leq \lceil\frac{n+k}{2}\rceil$.
	
	\begin{figure}[H]
		\tikzstyle{vertexY}=[circle,draw, top color=gray!5, bottom color=gray!30, minimum size=8pt, scale=0.6, inner sep=1pt]
		\begin{tikzpicture}[scale=1.5]
			
			\draw (2,0) ellipse (0.8 and 1);
			\draw (0,0) ellipse (0.5 and 0.6);
			\draw [thick, arrows = {-Stealth[reversed, reversed]}] (1.3,0.5)--(0.4,0.3);
			\draw [thick, arrows = {-Stealth[reversed, reversed]}] (1.3,-0.5)--(0.4,-0.3);
			\draw [thick, arrows = {-Stealth[reversed, reversed]}] (1.2,0)--(0.5,0);
			\draw (0,0) node {$\overline{K}_{\frac{n-k}{2}}$};
			\draw (2,0) node {$K_{\frac{n+k}{2}}$};
			\draw (1,-1.5) node {{\footnotesize (a) $n+k$ is even}};
			
		\end{tikzpicture}\hfill
		\begin{tikzpicture}[scale=1.5]
				\node (v1) at (0,0.2) [vertexY] {};
					\node (v2) at (2,0.65) [vertexY] {};
			\draw (2,0) ellipse (0.8 and 1);
			\draw (0,0) ellipse (0.5 and 0.6);
		  	  \draw [dashed] (v1)--(v2);
		\draw [thick, arrows = {-Stealth[reversed, reversed]}] (1.3,-0.5)--(0.4,-0.3);
		\draw [thick, arrows = {-Stealth[reversed, reversed]}] (1.2,0)--(0.5,0);
			\draw (0,0) node {$\overline{K}_{\frac{n-k-1}{2}}$};
			\draw (2,0) node {$K_{\frac{n+k+1}{2}}$};
			\draw (1,-1.5) node {{\footnotesize (b) $n+k$ is odd}};
		\end{tikzpicture}
		\centering
		\caption{Tight examples for Conjecture \ref{conj:2}}\label{fig:teforc2}
	\end{figure}

By applying an approach from \cite{GKM23}, we can obtain the following approximate version of Conjecture \ref{conj:2}.
	
	\begin{theorem}\label{thm:approx}
		For any integer $k\geq 0$ and oriented graph $D$ with order $n\geq 30+4(k-1)$. If $s^*(D)\geq 8k$, then there is a Hamilton oriented cycle $C$ in $D$ such that $\sigma_{\max}(C)\geq \lceil\frac{n+k}{2}\rceil$.
	\end{theorem}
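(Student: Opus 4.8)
The plan is to reduce the statement to a combinatorial claim about Hamilton cycles of the underlying graph $U(D)$ and then to improve such a cycle step by step. Since $s^*(D)\ge 8k\ge 0$, every pair of non-adjacent vertices $u,v$ satisfies $d(u)+d(v)\ge n$, so by Ore's theorem $U(D)$ has a Hamilton cycle; the strictly stronger bound $d(u)+d(v)\ge n+8k$ provides the slack used throughout. Any Hamilton cycle of $U(D)$ can be oriented, by choosing the sense of traversal, into a Hamilton oriented cycle $C$ of $D$ with $\sigma^+(C)\ge\lceil n/2\rceil$, since $\sigma^+(C)+\sigma^-(C)=n$; in particular the case $k=0$ is immediate. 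So it suffices, starting from a Hamilton oriented cycle $C$ with $\sigma^+(C)\ge\lceil n/2\rceil$, to modify $C$ within $U(D)$ until $\sigma^+(C)\ge\lceil (n+k)/2\rceil$; assume $\sigma^+(C)<\lceil (n+k)/2\rceil$, so that $C$ is almost balanced and still carries a linear number of backward arcs.

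The heart of the proof, following the approach of \cite{GKM23}, is a bounded sequence of local re-routings, each preserving a Hamilton cycle of $U(D)$ and strictly increasing $\sigma^+$. Decomposing $C$ into its maximal monotone runs, a maximal backward run has one endpoint at a valley (a sink of $C$) and the other at a peak (a source of $C$). I would show that whenever $\sigma^+(C)<\lceil (n+k)/2\rceil$ one of a few constant-size improvable configurations must occur: (i) two consecutive backward arcs on which a $2$-opt move --- delete two cycle-edges, reverse the enclosed segment, and insert two chords --- creates a net extra forward arc; (ii) a chord joining two peaks, or two valleys, or joining a peak and a valley across a short backward run, on which a P\'osa-type rotation yields a Hamilton cycle with strictly smaller total backward length; or (iii) a short backward run bypassed by a chord. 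Each such move is paid for by $O(1)$ chords of $U(D)$ and touches only $O(1)$ vertices, and the role of the hypothesis $s^*(D)\ge 8k$ is to guarantee that enough chords remain after the moves already performed. Since raising $\sigma^+$ from $\lceil n/2\rceil$ to $\lceil (n+k)/2\rceil$ needs at most about $k/2$ moves, each chargeable to $O(1)$ vertices and $O(1)$ units of degree-sum surplus, the precise thresholds $n\ge 26+4k$ and $s^*(D)\ge 8k$ are exactly what makes the total accounting close. When none of (i)--(iii) is present, $C$ must alternate forward/backward exactly, so $V(D)$ splits into its peaks and its valleys with $C$ alternating between them --- the configuration realised, with $s^*=0$, by the complete bipartite examples for which the bound is tight at $k=0$ --- and then any single ``surplus'' edge inside the peak class or the valley class, which exists because $s^*(D)>0$ for $k\ge 1$, supplies the first improving move.

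The main obstacle is establishing that an improving move is always available under an Ore-type hypothesis rather than a minimum-degree one, since $s^*(D)\ge 8k$ permits vertices of degree well below $n/2$. The key point is that any two such vertices are adjacent, so the set $L$ of vertices with $d(v)<n/2$ induces a tournament in $D$. One then routes $C$ through $L$ along a Hamilton directed path of $D[L]$, all of whose arcs are forward, and observes that for non-adjacent $u,v\in V(D)\setminus L$ one has $d_{D-L}(u)+d_{D-L}(v)\ge n+8k-2|L|=(n-|L|)+(8k-|L|)$, so $D-L$ inherits an Ore-type surplus of at least $8k-|L|$; this simultaneously bounds $|L|$ and lets the minimum-degree-flavoured re-routing run on $D-L$, after which the directed path through $L$ is re-inserted. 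Controlling how $L$ attaches to the remainder of the cycle, and keeping the degree-sum surplus strictly positive through all $O(k)$ moves, is where the genuine work lies; the remaining estimates are routine. Finally, since every move is found by checking adjacencies among $O(1)$ vertices, the whole procedure is constructive.
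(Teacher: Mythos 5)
Your proposal is a plan rather than a proof, and its central step is missing. The entire content of the theorem is the claim that whenever $\sigma^+(C)<\lceil (n+k)/2\rceil$ one of your improvable configurations (i)--(iii) must occur and can be executed using chords guaranteed by $s^*(D)\ge 8k$; you assert this but give no argument. An Ore-type hypothesis is a statement about \emph{non-adjacent} pairs and does not place chords near any particular configuration on the cycle, so the ``accounting'' by which $8k$ units of degree-sum surplus fund $O(k)$ moves is not something that follows from routine estimates --- it is the theorem. Two of your concrete claims are also wrong as stated. First, a $2$-opt move that reverses a segment flips the forward/backward status of \emph{every} arc in that segment, so it is not a change touching $O(1)$ vertices and its effect on $\sigma^+$ is uncontrolled unless you separately control the balance of the reversed segment. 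Second, your reduction to $D-L$, where $L=\{v: d(v)<n/2\}$, gives $D-L$ an Ore-type surplus of $8k-|L|$ by your own computation; but nothing bounds $|L|$ by $8k$. Since $L$ induces a tournament, each $v\in L$ only needs $d(v)\ge |L|-1$, so $|L|$ can be close to $n/2$ while all non-adjacent pairs still satisfy $d(u)+d(v)\ge n+8k$ (pair each low-degree vertex with high-degree non-neighbours). The claimed surplus is then negative and the re-routing on $D-L$ has nothing to run on. Moreover, threading the cycle through a Hamilton directed path of $D[L]$ forces $|L|-1$ prescribed edges, which again requires $|L|-1\le s^*$, unavailable here.

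For comparison, the paper avoids local search entirely. It converts $s^*(D)\ge 8k$ into a global edge count $a(D)\ge n(n+8k)/4$ via Cauchy--Schwarz on the complement (Lemma \ref{lem:1}); this many edges force $k$ vertex-disjoint \emph{good diamonds} (Lemma \ref{lem:2}, from \cite{GKM23}), each containing two antiparallel oriented paths of length $3$ with at least $2$ forward arcs in either traversal direction; a P\'osa-type lemma (Lemma \ref{lem:3}) then produces a Hamilton cycle of $U(D)$ containing all $3k\le 8k$ prescribed edges; finally one chooses the traversal direction and swaps $P_i\leftrightarrow Q_i$ per diamond to gain $2k$ forward arcs on $3k$ edges, giving $\tfrac{n-3k}{2}+2k=\tfrac{n+k}{2}$. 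If you want to salvage your approach, you would need to prove an ``improvement or structure'' dichotomy of the kind you describe, which is a substantial piece of work not present in the proposal; as written, the argument has a genuine gap.
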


The rest of this paper is organized as follows: in Section \ref{sec2}, we prove Theorem \ref{thm: degree sum}, in Section \ref{sec:clique-N(v)}, we prove Theorem \ref{clique-N(v)}, and in Section \ref{sec4}, we prove Theorem \ref{thm:approx}. 
\GGr{We conclude the paper with Section \ref{sec:further}, where we briefly discuss further research of digraph discrepancy.}





\section{Proof of Theorem \ref{thm: degree sum}}\label{sec2}
\QGr{The core technique of proving Theorem \ref{thm: degree sum} is to extend an oriented path or cycle by adding new vertices to it. This extension preserves the order of its original vertices with its discrepancy not decreasing. Specifically, We first identify a large oriented cycle $C$ with high discrepancy in the digraph $D$. Then we add all the vertices in $V(D) \setminus V(C)$ to $C$ to extend it to a Hamilton oriented cycle $C'$ while preserving high discrepancy. 

To accomplish this, Lemma \ref{lem:add x between pair} is needed. It allows us to add a sufficient number of vertices to an oriented path or cycle while not decreasing its discrepancy.}

	\begin{lemma}\label{lem:add x between pair}

    \QGr{Let $D$ be an oriented graph which contains an oriented path $P$ and a set of vertices $K$ disjoint from $V(P)$, such that $V(P)\subseteq N(v)$ for each $v\in K$. If $|V(P)|\ge |K|+2$, then $D$ contains an oriented path $P'$ with $V(P')=V(P)\cup K$ and same initial and terminal vertices as $P$, satisfying $\sigma^+(P')\ge \sigma^+(P) $. Similarly, an oriented path $P''$ satisfying $\sigma^-(P'') \ge \sigma^-(P)$ also exists.}
	\end{lemma}

    \begin{proof}
        \AYr{  Let $K=\{v_1,v_2,\dots,v_k\}$ and  $P = u_1u_2\ldots u_{l}$, where $l \geq k+2$. Let $A=\{a_1,a_2,\ldots,a_k\}$ be $k$ distinct arcs chosen randomly  and uniformly from $A(P)$. For each $i \in [k]$ let $a_i=s_it_i$ and let $P'$ be obtained from the path $P$,
        by substituting $a_i$ with the oriented path $s_i v_i t_i$ for each $i \in [k]$.
        
          Let $q$ and $r$ be any integers such that $2 \leq q \leq l-1$ and $r \in [k]$.
          If  $u_q v_r \in A(D)$ then $u_qv_r$ will be a forward arc in $P'$ with probability $\frac{1}{l-1}$, as this is the case if and only if $a_r = u_q u_{q+1}$.
        And, if  $v_r u_q \in A(D)$ then $v_r u_q$ will be a forward arc in $P'$ with probability $\frac{1}{l-1}$, as
         this is the case if $a_r = u_{q-1} u_{q}$. So every arc between $K$ and $\{u_2,u_3,u_4, \ldots , u_{l-1}\}$ will be a forward arc
        with probability $\frac{1}{l-1}$ and there are $k \cdot (l-2)$ such arcs.  And we remove at most $k$ forward arcs from $P$ when we remove the arcs in $A$. So, on average, $P'$
        will satisfy the following (as $l \geq k+2$),
         \[
        \sigma^+(P') \geq \sigma^+(P) + \frac{k\cdot (l-2)}{l-1} - k \geq \sigma^+(P) + \frac{k\cdot k}{k+1} - k >  \sigma^+(P) -1
        \]
        As $\sigma^+(P')$ is an integer this implies that there exists an oriented path $P'$ with $\sigma^+(P') \geq \sigma^+(P)$.
        Proving that there exists an oriented path $P''$ satisfying $\sigma^-(P'') \ge \sigma^-(P)$ can be done analogously.}
	\end{proof}

\GGr{Clearly, Lemma~\ref{lem:add x between pair} is of interest in itself in study of discrepancy of oriented paths and cycles. In algorithmic applications, naturally we may ask whether 
the proof of Lemma~\ref{lem:add x between pair} can be derandomized leading to a polynomial-time algorithm. Consider the complete bipartite graph $B$ with partite sets $A(P)$ and $K$. For an arc $a=xy\in A(P)$ and a vertex $v\in K$, let $P(a,v)$ be the path obtained from $P$ by replacing $a$ with the path $xvy$ and let the weight $w(av)$ be equal to $\sigma^+(P(a,v))-\sigma^+(P)$. 
Note that the set $A=\{a_1,a_2,\ldots,a_k\}$ of chosen arcs of $P$ can be viewed of as a matching $\{a_i v_i \; | \; i \in [k] \}$ in $B$. }
\GGr{The proof of Lemma~\ref{lem:add x between pair} essentially shows that the average weight of a matching in $B$ that saturates $K$ is strictly greater than $-1$.
As we, in polynomial time, can find a maximum weight matching that saturates $K$ whose weight is at least the average weight of such a matching,
we can, in polynomial time, create a path $P'$ with $\sigma^+(P') \geq \sigma^+(P)$ (by substituting each $a_i$ by $s_i v_i t_i$ for each edge $a_i v_i$ in the maximum weight matching).
}

\vspace{2mm}
	
    \QG{With Lemma \ref{lem:add x between pair} established, we are now ready to prove Theorem \ref{thm: degree sum}.}\\
	
    \QG{\noindent\textbf{Theorem \ref{thm: degree sum}.} \textit{Let $D$ be an oriented graph on $n$ vertices with minimum degree $1<\delta<\frac{n}{2}$. If $d(u)+d(v)\ge n+\delta-2$ for each pair of non-adjacent vertices $u$ and $v$, then there exists a Hamilton oriented cycle $C$ in $D$ such that $\sigma_{\max}(C) \ge n-\delta$.}}

	\begin{proof}
    Let $v_0\in V(D)$ and $d(v_0)=\delta$. Let $N(v_0)=\{v_1,v_2,\ldots,v_{\delta}\}$ and $U=V(D)\setminus N[v_0]=\{u_1,u_2,\ldots, u_{n-\delta-1}\}$. Since $d(u)+d(v)\ge n+\delta-2$ for any non-adjacent $u,v \in V(D)$, $d(u)\ge n-2$ for all $u\in U$, which means $u$ is adjacent to all vertices except $v_0$ in $D$.
		
		For $n=5$, there is a Hamilton oriented cycle $C$ in $D$ such that $\sigma_{\max}(C)\ge 3\ge n-\delta$. For $n\ge 6$, since  $D[U]$ is isomorphic to a tournament of order $n-\delta -1$, Theorem \ref{Dirac-type theorem} implies that there is a Hamilton oriented cycle $C_0$ in $D[U]$ such that $\sigma_{\max}(C_0)\ge n-\delta -2$. Without loss of generality, we assume that $C_0 = u_1u_2\ldots u_{n-\delta -1}u_1$, $\sigma_{\max}(C_0) = \sigma^{+}(C_0)$ and $u_2u_1$ is the only backward arc in $C_0$ (If $\sigma_{\max}(C_0)= n-\delta -2$). For each $i\in [n-\delta -1]$, we get Hamilton oriented cycles $C^1_{i,i+1}=u_iv_1v_0v_2u_{i+1}u_{i+2}\dots u_{i-1}u_i$ and $C^2_{i,i+1}=u_iv_2v_0v_1u_{i+1}u_{i+2}\dots u_{i-1}u_i$ (all subscripts are taken modulo $n-\delta -1$)  in $D[U\cup \{v_0,v_1,v_2\}]$.
		\begin{claim}\label{claim_1}
			\YLn{There exists $C^t_{i,i+1}$ such that $\sigma^+(C^t_{i,i+1})\ge n-\delta$ for some $i\in [n-\delta -1]$ and $t\in [2]$}.
		\end{claim}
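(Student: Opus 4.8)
The plan is to argue by contradiction: suppose $\sigma^+(C^t_{i,i+1})\le n-\delta-1$ for every $i\in[n-\delta-1]$ and every $t\in\{1,2\}$, and derive a contradiction. Write $m=n-\delta-1=|U|$; recall that $C_0$ exists with $\sigma_{\max}(C_0)\ge m-1$ by Theorem~\ref{Dirac-type theorem}, that every vertex of $U$ is adjacent to both $v_1$ and $v_2$, and that $m\ge 3$ (since $n\ge 6$ and $\delta<n/2$). Each cycle $C^t_{i,i+1}$ decomposes into the sub-path of $C_0$ through $u_{i+1},u_{i+2},\dots,u_{i-1},u_i$ — that is, $C_0$ with the arc between $u_i$ and $u_{i+1}$ deleted, the remaining arcs keeping their orientation relative to the traversal — together with the four-arc segment $u_i$, one of $v_1,v_2$, then $v_0$, then the other of $v_1,v_2$, then $u_{i+1}$. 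Hence $\sigma^+(C^t_{i,i+1})=f_i+s^t_i$, where $f_i:=\sigma^+(C_0)-\varepsilon_i$, with $\varepsilon_i\in\{0,1\}$ equal to $1$ precisely when the arc between $u_i$ and $u_{i+1}$ is forward on $C_0$, and $s^t_i$ is the number of forward arcs of the segment. Summing the two segment contributions for a fixed $i$ and using that $D$ is an oriented graph (so exactly one of $v_0v_1,v_1v_0$ and exactly one of $v_0v_2,v_2v_0$ is an arc) gives the key identity $s^1_i+s^2_i=2+a_i$, where $a_i\in\{0,1,2,3,4\}$ is the number of the ordered pairs $u_iv_1,u_iv_2,v_1u_{i+1},v_2u_{i+1}$ lying in $A(D)$; note $a_i=0$ exactly when $v_1u_i,v_2u_i,u_{i+1}v_1,u_{i+1}v_2\in A(D)$.

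First suppose $C_0$ is a directed cycle, so $\varepsilon_i=1$ and $f_i=m-1$ for all $i$. The hypothesis forces $s^t_i\le (n-\delta-1)-(m-1)=1$ for $t=1,2$, hence $s^1_i+s^2_i\le 2$, and by the identity $a_i=0$ for every $i$. Then $v_1u_i\in A(D)$ for all $i$ and $u_{i+1}v_1\in A(D)$ for all $i$; taking $i=1$ in the first and $i=m$ in the second yields $v_1u_1,u_1v_1\in A(D)$ simultaneously, contradicting that $D$ is an oriented graph.

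Now suppose instead $\sigma_{\max}(C_0)=m-1$, so (by the normalisation already fixed) $u_2u_1$ is the unique backward arc of $C_0$; thus $f_1=m-1$ and $f_i=m-2$ for $i\ne 1$. The hypothesis gives $s^t_1\le 1$ and $s^t_i\le 2$ for all $i\ne 1$, $t=1,2$; so $a_1=0$, i.e. $v_1u_1,v_2u_1,u_2v_1,u_2v_2\in A(D)$, while $a_i\le 2$ for all $i\ne 1$. Now propagate ``$u_iv_1,u_iv_2\in A(D)$'' along $C_0$: if it holds for some $i$ with $2\le i\le m-1$ then, among the four pairs $u_iv_1,u_iv_2,v_1u_{i+1},v_2u_{i+1}$, the first two are arcs, so $a_i\le 2$ forces $v_1u_{i+1},v_2u_{i+1}\notin A(D)$, whence $u_{i+1}v_1,u_{i+1}v_2\in A(D)$ (using adjacency of $u_{i+1}$ to $v_1,v_2$). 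Beginning at $i=2$ — where $u_2v_1,u_2v_2\in A(D)$ is already known — this gives $u_jv_1,u_jv_2\in A(D)$ for all $j\in\{2,\dots,m\}$. Finally, at $i=m$ (so $u_{i+1}=u_1$) all four pairs $u_mv_1,u_mv_2,v_1u_1,v_2u_1$ are arcs, so $a_m=4$ and $s^1_m+s^2_m=6$; but the hypothesis forces $s^1_m+s^2_m\le 4$, a contradiction. This proves the claim.

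The crux, as I see it, is the second case. Once $C_0$ has a backward arc, only the single index $i=1$ supplies the strong bound $s^t_1\le 1$; for every other index the bound is merely $s^t_i\le 2$, so no single segment yields a contradiction on its own. The remedy is to use $i=1$ to pin down the arcs joining $\{u_1,u_2\}$ to $\{v_1,v_2\}$ and then carry ``$u_jv_1,u_jv_2\in A(D)$'' all the way around $C_0$ back to $u_1$, where it collides with $v_1u_1,v_2u_1\in A(D)$. One should also watch the small cases: both the normalisation and the propagation require $m\ge 3$ (guaranteed by $n\ge 6$ and $\delta<n/2$; the case $n=5$ is handled separately), and for $m=3$ the propagation is the single step $i=2$ followed by the closing step $i=m=3$.
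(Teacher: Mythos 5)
Your proof is correct and follows essentially the same strategy as the paper's: assume every $\sigma^+(C^t_{i,i+1})\le n-\delta-1$, use the index $i=1$ (where the path contribution is $m-1$) to force $v_1u_1,v_2u_1,u_2v_1,u_2v_2\in A(D)$, propagate the constraint around $C_0$, and reach a contradiction at the wrap-around. Your packaging via the identity $s^1_i+s^2_i=2+a_i$ and the explicit split on whether $C_0$ is a directed cycle is a cleaner presentation of the same argument, but not a different route.
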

		Suppose that $\sigma^+(C^t_{i,i+1}) \le n-\delta -1$ for each $i\in [n-\delta -1]$ and $t\in [2]$. We may assume $\{v_1v_0,v_2v_0\}\subset A(D)$ or $\{v_0v_1,v_0v_2\}\subset A(D)$, since otherwise $\sigma^+(C^1_{1,2}) \ge n-\delta$  if $v_1v_0v_2$ is a directed path or $\sigma^+(C^2_{1,2}) \ge n-\delta$ if $v_2v_0v_1$ is a directed path. Without loss of generality, assume $\{v_1v_0,v_2v_0\}\subset A(D)$. Since $v_1v_0\in A(D)$, $ \sigma^+(C^1_{1,2}) \ge n-\delta -1$ and $ \sigma^+(C^1_{1,2}) = n-\delta -1$. Thus $\{v_1u_1, u_2v_2\}\subset A(D)$. Similarly, since $v_2v_0\in A(D)$, $\sigma^+(C^2_{1,2}) = n-\delta -1$ and $\{v_2u_1, u_2v_1\}\subset A(D)$. \QGr{Note that, for any $i\in [n-\delta -1]\setminus\{1\}$, if $\{u_iv_1, v_1v_0\} \subset A(D)$, we have $ \sigma^+(C^1_{i,i+1}) = n-\delta -1$ and then $u_{i+1}v_2\in A(D)$, and  if  $\{u_iv_2, v_2v_0\} \subset A(D)$, we have $ \sigma^+(C^2_{i,i+1}) = n-\delta -1$ and then $u_{i+1}v_1\in A(D)$. Now, since $\{v_1v_0,v_2v_0\}\subset A(D)$ and $\{u_2v_1, u_2v_2\}\subset A(D)$,} we have $\{u_{i+1}v_1,u_{i+1}v_2\}\subset A(D)$ for every $i\in [n-\delta -1]$. Thus, $\{u_1v_1,u_1v_2,v_1u_1, v_2u_1\}\subset A(D)$, which contradicts the condition that $D$ is an oriented graph. Thus Claim \ref{claim_1} holds.
		\vspace{0.5cm}
		
        Let $C^{t_0}_{i_0,i_0+1}$ be the oriented cycle such that $\sigma^+(C^{t_0}_{i_0,i_0+1})\ge n-\delta$ as in Claim \ref{claim_1}. Consider the oriented path $P=u_{i_0+1}u_{i_0+2}\dots u_{i_0}$ (all subscripts are taken modulo $n-\delta-1$). Since $\delta-2 \le (n-\delta -1) -2$, Lemma \ref{lem:add x between pair} implies that there exists an oriented path $P'$ with $V(P')=V(P)\cup \{v_3,\dots,v_{\delta}\}$ and the same initial and terminal vertices as $P$, satisfying $\sigma^+(P') \ge \sigma^+(P)$. Let $C$ be the Hamilton oriented cycle in $D$, whose arc set consists of $A(C^{t_0}_{i_0,i_0+1})\setminus A(P)$ and $A(P')$. Thus, $\sigma_{\max}(C)\ge \sigma^+(C^{t_0}_{i_0,i_0+1})=n-\delta$.
        \end{proof}

\section{Proof of Theorem \ref{clique-N(v)}}\label{sec:clique-N(v)}
  Firstly, we need the following lemma.

    \begin{lemma}\label{exchange vertices}
        Let $D$ be a tournament on $n\ge 3$ vertices. For a Hamilton oriented cycle $C=v_1 v_2 \dots v_{i-1}v_i v_{i+1}\dots v_{j-1}v_j v_{j+1}\dots v_n v_1$ in $D$, \QG{let $C'$  be a Hamilton oriented cycle which is obtained from $C$, satisfying $C'=v_1 v_2\dots v_{i-1}v_j v_{i+1}\dots v_{j-1} v_i v_{j+1}\dots\\ v_n v_1$.} 
        Then $\sigma^+(C') \ge \sigma^+(C)-4$ and $\sigma^-(C') \ge \sigma^-(C)-4$. In particular, if $v_i$ and $v_j$ are adjacent in $C$, then $\sigma^+(C') \ge \sigma^+(C) - 3$ and $\sigma^-(C') \ge \sigma^-(C) - 3$.
    \end{lemma}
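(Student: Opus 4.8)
The plan is to prove Lemma~\ref{exchange vertices} by a careful local analysis of how swapping the positions of $v_i$ and $v_j$ in the cyclic sequence changes the orientation-type of the at most four arcs incident to these two vertices on the cycle. First I would observe that all arcs of $C$ other than the four arcs $v_{i-1}v_i$, $v_iv_{i+1}$, $v_{j-1}v_j$, $v_jv_{j+1}$ (in the underlying sense) are retained verbatim in $C'$, since $D$ being a tournament means that between any two vertices there is exactly one arc, so the arc of $D$ used along each unchanged edge of the cycle is the same in $C'$ as in $C$. Hence the only possible change to $\sigma^+$ comes from replacing these four ``old'' cycle-arcs by the four ``new'' cycle-arcs $v_{i-1}v_j$, $v_jv_{i+1}$, $v_{j-1}v_i$, $v_iv_{j+1}$ (again in the underlying sense; the actual arc direction in $D$ is whatever $D$ provides). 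Since each of these four new edges contributes either $0$ or $1$ to $\sigma^+(C')$, we lose at most the full count of $\sigma^+$ contributed by the four old edges, which is at most $4$; therefore $\sigma^+(C')\ge \sigma^+(C)-4$, and by the symmetric argument (or by applying the bound to the reverse cycle) $\sigma^-(C')\ge \sigma^-(C)-4$.

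Next I would handle the special case where $v_i$ and $v_j$ are adjacent on $C$, say $j=i+1$ (the case $i=1,j=n$ is symmetric under a cyclic relabelling). Then the relevant portion of $C$ is $v_{i-1}v_iv_{i+1}v_{i+2}$ and in $C'$ it becomes $v_{i-1}v_{i+1}v_iv_{i+2}$; only three cycle-edges are affected in the underlying sense, namely $\{v_{i-1},v_i\}$, $\{v_i,v_{i+1}\}$, $\{v_{i+1},v_{i+2}\}$ are replaced by $\{v_{i-1},v_{i+1}\}$, $\{v_{i+1},v_i\}$, $\{v_i,v_{i+2}\}$. The edge $\{v_i,v_{i+1}\}$ is common to both lists and carries the same arc of $D$; so in fact only two cycle-edges genuinely change, and even counting generously with the middle edge, at most three old forward arcs can be lost. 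Thus $\sigma^+(C')\ge\sigma^+(C)-3$ and similarly $\sigma^-(C')\ge\sigma^-(C)-3$.

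I expect the main technical nuisance — not a deep obstacle, but the place where care is needed — to be the bookkeeping when $v_i$ and $v_j$ are at distance $2$ on $C$ (i.e. they share a common neighbour $v_{i+1}=v_{j-1}$), or when wrap-around at the ends of the written sequence makes the index arithmetic modulo $n$ subtle; in those configurations one must verify that the set of changed edges is still among the at most four listed and that no edge is inadvertently double-counted or miscounted. The cleanest way to organize this is to fix notation for the cyclic sequence, list exactly which underlying edges of $C$ are \emph{not} edges of $C'$ and vice versa (a symmetric difference of size at most $4$, or at most $3$ when $v_i,v_j$ are adjacent, after cancelling the shared middle edge), note that each removed edge costs at most one forward arc and each added edge gains at least zero, and conclude. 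The argument for $\sigma^-$ is identical after reversing all arcs of $D$, or one simply observes the estimate is orientation-blind.
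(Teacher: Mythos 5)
Your proof is correct and follows essentially the same route as the paper's, which simply observes that all vertices other than $v_i,v_j$ keep their cyclic order, so only the at most four cycle-arcs incident with $v_i$ or $v_j$ can change status, each costing at most one forward arc (and that $C'$ is a genuine oriented cycle because $D$ is a tournament). One small caveat in your adjacent case: the middle edge $\{v_i,v_j\}$ carries the same arc of $D$ but is traversed in the opposite direction in $C'$, so its forward/backward status \emph{can} flip — it is not true that ``only two cycle-edges genuinely change'' — but your ``counting generously with the middle edge'' clause already absorbs this, so the bound of $3$ stands.
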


    \begin{proof}
       \QG{$C'$ is definitely an oriented cycle, as $D$ is a tournament. Since all vertices except $v_i$ and $v_j$ appear in the same order in $C$ and $C'$ and there are at most four arcs in $C$ incident with $v_i$ or $v_j$, we have that $\sigma^+(C') \ge \sigma^+(C)-4$ and $\sigma^-(C') \ge \sigma^-(C)-4$.}
    \end{proof}

   \QG{\noindent\textbf{Theorem \ref{clique-N(v)}.} \textit{Let $D$ be an oriented graph on $n$ vertices with minimum degree $3 \le \delta \le \frac{n}{3}$, which satisfies \QGr{that $d(u)+d(v)\ge n$ for each pair of non-adjacent vertices $u$ and $v$ in $D$}. If there is a vertex $v_0 \in V(D)$ such that $d(v_0)=\delta$ and $D[N(v_0)]$ is isomorphic to a tournament, then there is a Hamilton oriented cycle $C$ in $D$ such that $\sigma_{\max}(C) \ge n-\delta-2$.}}

  \begin{proof}Suppose that $v_0 \in V(D)$ with $d(v_0) = \delta$ and $D[N(v_0)]$ is isomorphic to a tournament. We denote $V(D)\backslash N[v_0]$ by $W$. Then $|W|=n-\delta-1$. Since for every vertex $w \in W$, we have $d(w) \ge n-\delta$ and $n-2\delta \le d_{D[W]}(w) \le n-\delta-2$, then $w$ must be adjacent to at least two vertices in $N(v_0)$.

  Since  $\delta(D[W]) \ge n-2\delta \ge \frac{n-\delta-1}{2}= \frac{|W|}{2}$ when $\delta \le \frac{n}{3}$, Theorem~\ref{Dirac-type theorem} implies that there is a Hamilton oriented cycle $C_1$ in $D[W]$ such that $\sigma_{\max}(C_1)\ge \delta(D[W])$. Let $C_1=w_1w_2 \ldots w_{n-\delta-1}w_1$. Similarly, there exists an Hamilton oriented cycle $C_2 = v_0 v_1 v_2\ldots v_{\delta}v_0$ with $\sigma_{\max}(C_2) \ge \delta$  in $D[N[v_0]]$. 
  Let $w_k \in W$ such that $d_{D[W]}(w_k) = \delta(D[W])$. 
  Since $V(C_1)\cap V(C_2)=\emptyset$, without loss of generality, we may assume that $\sigma^+(C_i) = \sigma_{\max}(C_i)$, for each $i \in [2]$.

\begin{claim}\label{claim:2.2-1}
     If there exist $v_{j}$ and $v_{j+1}$ for $j\in [\delta-1]$ in $C_2$ that are adjacent to $w_{k+1}$ and $w_{k}$ respectively (all subscripts are taken modulo $n-\delta-1$), then there is a Hamilton oriented cycle $C$ in $D$ such that $\sigma_{\max}(C) \ge n-\delta-2$.
\end{claim}
 Consider $C = w_1\ldots w_k v_{j+1}v_{j+2} \ldots v_{\delta} v_0 v_1\dots v_{j} w_{k+1} \ldots w_{n-\delta-1}w_1$, which is obtained from $C_1$ by deleting the arc between $w_k$ and $w_{k+1}$, and adding the arc between $w_k$ and $v_{j+1}$, the arc between $w_{k+1}$ and $v_{j}$, and the longer oriented path on $C_2$ from $v_{j+1}$ to $v_{j}$. We have $\sigma_{\max}(C) \ge \sigma^+(C)\ge n-2\delta - 1 + \delta -1=n-\delta -2$. Thus Claim \ref{claim:2.2-1} holds.\\

Now, we may assume that for each $j\in [\delta-1]$, if $w_{k+1}$ is adjacent to $v_j$, then $w_{k}$ is not adjacent to $v_{j+1}$ in the following discussion. Then, $w_k$ is not adjacent to at least one vertex in $V(C_2)\setminus\{v_0\}$ since $w_{k+1}$ has at least two neighbors in $V(C_2)\setminus\{v_0\}$. In this case, $\delta(D[W])\ge n-2\delta+1$.
\begin{claim}\label{claim:2.2-2}
    If $v_1$ and $v_{\delta}$ are adjacent to $w_k$ and $w_{k+1} \pmod{n-\delta-1}$ respectively, then there is a Hamilton oriented cycle $C$ in $D$ such that $\sigma_{\max}(C) \ge n-\delta-2$.
\end{claim}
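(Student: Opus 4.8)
The plan is to splice a long forward portion of $C_1$ on $W$ together with a forward Hamilton path of $D[N[v_0]]$ running from $v_1$ to $v_\delta$, using the two hypothesised arcs between $\{w_k,w_{k+1}\}$ and $\{v_1,v_\delta\}$ as the connectors. Concretely, I would delete from $C_1$ the arc between $w_k$ and $w_{k+1}$, obtaining a Hamilton path $Q_W$ of $D[W]$ with ends $w_{k+1}$ and $w_k$; since we are in the sub-case where $\delta(D[W])\ge n-2\delta+1$ and $C_1$ was chosen with $\sigma^+(C_1)=\sigma_{\max}(C_1)\ge\delta(D[W])$, deleting a single arc gives $\sigma^+(Q_W)\ge n-2\delta$.

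For the $N[v_0]$-part I would start from $C_2=v_0v_1\cdots v_\delta v_0$ and delete $v_0$, leaving the Hamilton path $P_N=v_1v_2\cdots v_\delta$ of $D[N(v_0)]$ with $\sigma^+(P_N)\ge\sigma^+(C_2)-2\ge\delta-2$. The one point that needs care is that $P_N$ has dropped $v_0$, so it is not yet a Hamilton path of $N[v_0]$; but $v_0$ is adjacent to every vertex of $N(v_0)$, so Lemma \ref{lem:add x between pair} (with a single added vertex, valid since $\delta\ge 3$) reinserts $v_0$ \emph{strictly between} two consecutive vertices of $P_N$, producing a Hamilton path $P_N'$ of $D[N[v_0]]$ that still has endpoints $v_1$ and $v_\delta$ and still satisfies $\sigma^+(P_N')\ge\delta-2$. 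I would then set $C$ to be $Q_W$, followed by the arc between $w_k$ and $v_1$, then $P_N'$, then the arc between $v_\delta$ and $w_{k+1}$; its vertex set is $W\cup N[v_0]=V(D)$, so it is a Hamilton oriented cycle, and even discarding the two connecting arcs we get $\sigma^+(C)\ge\sigma^+(Q_W)+\sigma^+(P_N')\ge(n-2\delta)+(\delta-2)=n-\delta-2$, hence $\sigma_{\max}(C)\ge n-\delta-2$.

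I do not expect a genuine obstacle here: the whole argument is a routing construction plus arithmetic with the degree estimate $\delta(D[W])\ge n-2\delta+1$ already established before the claim. The only idea worth isolating is that $v_0$ cannot be absorbed by $W$ (it has no neighbours there) and so must be placed in the interior of the $N(v_0)$-path, which is exactly what Lemma \ref{lem:add x between pair} achieves without disturbing the two endpoints $v_1,v_\delta$ needed for the splice.
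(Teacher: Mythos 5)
Your proposal is correct and essentially identical to the paper's argument: both delete the arc between $w_k$ and $w_{k+1}$, splice in the $v_1$-to-$v_\delta$ portion of $C_2$ via the two hypothesised connecting arcs, use the bound $\delta(D[W])\ge n-2\delta+1$ together with $\sigma^+(C_2)\ge\delta$ to get the count $(n-2\delta)+(\delta-2)=n-\delta-2$, and reinsert $v_0$ with Lemma \ref{lem:add x between pair}. The only (immaterial) difference is that you insert $v_0$ into the $N(v_0)$-path before splicing, whereas the paper splices first and then inserts $v_0$ into the resulting cycle $C'$.
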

 Consider $C' = w_1\ldots w_k v_{1}v_{2} \ldots v_{\delta}w_{k+1} \ldots w_{n-\delta-1}w_1$, which is obtained from $C_1$ by deleting the arc between $w_k$ and $w_{k+1}$, and adding the arc between $w_k$ and $v_{1}$, the arc between $w_{k+1}$ and $v_{\delta}$ and \QGr{the oriented path $C_2-\{v_0\}$.} We have $\sigma^+(C') \ge n-2\delta +1 - 1 + \delta -2=n-\delta -2$. By Lemma \ref{lem:add x between pair}, we can add $v_0$ to $C'$ when $\delta \ge 3$, to obtain the desired Hamilton oriented cycle $C$ in $D$ such that $\sigma_{\max}(C)\ge  \sigma_{\max}(C') \ge n-\delta - 2$.  Thus Claim \ref{claim:2.2-2} holds.\\

 If $\delta(D[W])=n-2\delta+1$, then $w_k$ is not adjacent to at most one vertex in $V(C_2)\setminus\{v_0\}$. By the assumption before, $w_k$ is not adjacent to exactly one vertex $v_{i_1}$ in $V(C_2)\setminus\{v_0\}$. Since $w_{k+1}$ is adjacent to at least two vertices in $V(C_2)\setminus\{v_0\}$, we may assume that $v_{j_1}$ and $v_{j_2}$ are adjacent to $w_{k+1}$ and $j_1<j_2$. By the assumption before, we have $i_1=j_1+1$ and $j_2=\delta$. Then $v_1$ and $v_{\delta}$ are adjacent to $w_k$ and $w_{k+1}$ respectively. By Claim \ref{claim:2.2-2}, we are done.

 We may also assume that either $w_{k}$ is not adjacent to $v_{1}$ or $w_{k+1}$ is not adjacent to $v_{\delta}$ in the following discussion. In this case, $\delta(D[W])\ge n-2\delta+2$.

 \begin{claim}\label{claim:2.2-3}
     If there exist $v_{j}$ and $v_{j+1}$ for $j\in [\delta-1]$ in $C_2$ that are adjacent to $w_{k}$ and $w_{k+1}$ (all subscripts are taken modulo $n-\delta-1$) respectively, then there is a Hamilton oriented cycle $C$ in $D$ such that $\sigma_{\max}(C) \ge n-\delta-2$.
 \end{claim}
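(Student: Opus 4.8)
The plan is to mirror the proofs of Claims~\ref{claim:2.2-1} and \ref{claim:2.2-2}, rerouting the two Hamilton oriented cycles $C_1$ in $D[W]$ and $C_2$ in $D[N[v_0]]$ through an appropriate pair of consecutive vertices on $C_1$. Recall we have $C_1=w_1w_2\ldots w_{n-\delta-1}w_1$ with $\sigma^+(C_1)=\sigma_{\max}(C_1)\ge \delta(D[W])\ge n-2\delta+2$ (the last bound being the standing assumption at this point in the argument), and $C_2=v_0v_1\ldots v_\delta v_0$ with $\sigma^+(C_2)=\sigma_{\max}(C_2)\ge\delta$. Given the hypothesis of the claim, there are $v_j,v_{j+1}$ on $C_2$ with $v_j$ adjacent to $w_k$ and $v_{j+1}$ adjacent to $w_{k+1}\pmod{n-\delta-1}$. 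The construction is to splice: delete the arc of $C_1$ between $w_k$ and $w_{k+1}$, insert the arc between $w_k$ and $v_j$, then the segment of $C_2$ running from $v_j$ backwards through $v_0$ to $v_{j+1}$ (i.e. $v_j v_{j-1}\dots v_1 v_0 v_\delta v_{\delta-1}\dots v_{j+1}$, the part of $C_2$ not using the edge $v_jv_{j+1}$), then the arc between $v_{j+1}$ and $w_{k+1}$, closing up along the rest of $C_1$.

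First I would check that the resulting closed walk $C$ is a Hamilton oriented cycle of $D$: it visits every vertex of $W$ (via $C_1$) and every vertex of $N[v_0]$ (via the long segment of $C_2$) exactly once, and $U(C)$ is a cycle since we removed exactly one edge from each of $U(C_1)$ and the cycle structure and reconnected with two new edges. Next I would bound $\sigma^+(C)$ from below. The segment inherited from $C_1$ contributes at least $\sigma^+(C_1)-1\ge n-2\delta+1$ forward arcs (we lost at most the one deleted arc). The segment inherited from $C_2$ — which is all of $C_2$ except the single edge between $v_j$ and $v_{j+1}$ — contributes at least $\sigma^+(C_2)-1\ge\delta-1$ forward arcs in the orientation it is traversed; here one must be slightly careful, since $C_2$ is traversed in the ``reverse'' direction relative to its own cyclic orientation, but reversing the traversal direction of a fixed oriented cycle swaps the roles of forward and backward, and the path used is $C_2$ minus one edge, so it contributes at least $\min\{\sigma^+(C_2),\sigma^-(C_2)\}-1\ge$ well — actually since we only need $\sigma_{\max}(C)\ge n-\delta-2$, it suffices that one of the two traversal directions of the whole $C$ gives enough forward arcs; by choosing to count in whichever global direction makes the $C_2$-part contribute $\sigma^+(C_2)-1$ rather than $\sigma^-(C_2)-1$, we get at least $(n-2\delta+1)+(\delta-1)-2 = n-\delta-2$ after discarding at most two arcs incident to $w_k,w_{k+1}$ whose orientations were newly chosen. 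So $\sigma_{\max}(C)\ge\sigma^+(C)\ge n-\delta-2$, as desired.

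The main obstacle I expect is the bookkeeping of traversal direction: $C$ is a single oriented cycle, and $\sigma^+$ depends on which way we read it, so I must fix one global direction and verify that, read that way, the $C_1$-portion still contributes $\approx\sigma^+(C_1)$ forward arcs while the $C_2$-portion contributes $\approx\sigma^+(C_2)$ forward arcs — the two sub-cycles must be ``aligned'' correctly, and if they are not, one replaces $C_2$ by its reverse (equivalently uses the symmetric splice through $w_k$ adjacent to $v_{j+1}$ and $w_{k+1}$ adjacent to $v_j$, which is covered by the same hypothesis up to relabelling). A secondary point is confirming $\delta(D[W])\ge n-2\delta+2$ is indeed available here — this was established just above the claim in the running case analysis — so that the final count $n-2\delta+2-1 + \delta-1-2$ still clears $n-\delta-2$ with the appropriate slack; tracking exactly how many of the four arcs at $w_k,w_{k+1}$ can be backward is the only genuinely delicate estimate, and it is resolved exactly as in Lemma~\ref{exchange vertices}, namely at most two newly-introduced arcs are lost.
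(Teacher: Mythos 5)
Your construction splices $C_2$ into $C_1$ by entering at $v_j$ and leaving at $v_{j+1}$, which forces the $C_2$-portion to be traversed as $v_jv_{j-1}\dots v_0v_\delta\dots v_{j+1}$, i.e.\ against the orientation in which $\sigma^+(C_2)\ge\delta$ was measured. This is exactly the obstruction that distinguishes Claim~\ref{claim:2.2-3} from Claim~\ref{claim:2.2-1}, and neither of your two proposed rescues works. First, there is no ``global direction'' of the single cycle $C$ in which the $C_1$-part contributes about $\sigma^+(C_1)$ \emph{and} the $C_2$-part contributes about $\sigma^+(C_2)$: one reading direction gives roughly $\sigma^+(C_1)+\sigma^-(C_2)\le (n-\delta-2)+1$ minus losses only when $\delta$ is tiny, and the other gives roughly $\sigma^-(C_1)+\sigma^+(C_2)\le(\delta-3)+\delta$, far below $n-\delta-2$ when $\delta\le n/3$. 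Second, the ``symmetric splice'' requires $v_{j+1}$ adjacent to $w_k$ and $v_j$ adjacent to $w_{k+1}$, which is the hypothesis of Claim~\ref{claim:2.2-1}, not of this claim; at this point of the case analysis that configuration has already been excluded, so it is not available ``up to relabelling''. Your final count $(n-2\delta+1)+(\delta-1)-2$ therefore adds two quantities that cannot be realised simultaneously.

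The missing idea is the one supplied by Lemma~\ref{exchange vertices}: since $D[N[v_0]]$ is a tournament, one may first transpose $v_j$ and $v_{j+1}$ on $C_2$ to obtain an oriented cycle $C_2'$ with $\sigma^+(C_2')\ge\sigma^+(C_2)-3\ge\delta-3$ in which $v_{j+1}$ now precedes $v_j$. The splice $w_kv_jv_{j+2}\dots v_\delta v_0v_1\dots v_{j-1}v_{j+1}w_{k+1}$ then runs along $C_2'$ in its forward direction, so the $C_2$-portion contributes at least $\delta-3$ forward arcs in the \emph{same} reading direction in which the $C_1$-portion contributes at least $n-2\delta+2-1$, giving $\sigma^+(C)\ge(n-2\delta+1)+(\delta-3)=n-\delta-2$. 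Without this transposition step the argument does not close.
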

   Let $C'_2$ be an oriented cycle which is obtained from $C_2$ in Lemma \ref{exchange vertices}, by changing $v_{j}$ and $v_{j+1}$ in $C_2$. By Lemma~\ref{exchange vertices}, $\sigma^+(C_2')\ge \sigma^+(C_2)-3\ge \delta - 3$. Consider $C = w_1\ldots w_k v_{j}v_{j+2} \ldots v_{\delta} v_0 v_1\dots v_{j-1}v_{j+1} w_{k+1} \ldots w_{n-\delta-1}w_1$, which is obtained from $C_1$ by deleting the arc between $w_k$ and $w_{k+1}$, and adding the arc between $w_k$ and $v_{j}$, the arc between $w_{k+1}$ and $v_{j+1}$, and the longer path on $C'_2$ from $v_{j}$ to $v_{j+1}$. We have $\sigma_{\max}(C) \ge \sigma^+(C)\ge n-2\delta +2 - 1 + \delta -3=n-\delta -2$. Thus Claim \ref{claim:2.2-3} holds.\\

   Based on the former two assumptions, we may assume that for each $j\in [\delta-1]$, if $w_{k+1}$ is adjacent to $v_{j+1}$, then $w_{k}$ is not adjacent to $v_j$ in the following discussion.

   If $\delta(D[W])= n-2\delta+2$, then $w_k$ is not adjacent to at most two vertices in $V(C_2)\setminus\{v_0\}$. By the assumptions before, $w_k$ is not adjacent to exactly two vertices $v_{i_1}$ and $v_{i_2}$ in $V(C_2)\setminus\{v_0\}$, in which $i_1<i_2$. Since $w_{k+1}$ is adjacent to at least two vertices in $V(C_2)\setminus\{v_0\}$, we may assume that $v_{j_1}$ and $v_{j_2}$ are adjacent to $w_{k+1}$ and $j_1<j_2$. By the assumptions before, $i_1=j_1+1=j_2-1$ and $i_2=j_2+1$ and $j_1=1$, or $i_1=j_1-1$ and $i_2=j_1+1=j_2-1$ and $j_2=\delta$. Then both of $w_k$ and $w_{k+1}$ are adjacent to $v_1$ and $v_3$, or $v_{\delta-2}$ and $v_{\delta}$. \QGr{If both of $w_k$ and $w_{k+1}$ are adjacent to $v_1$ and $v_3$, let $C'_2$ be an oriented cycle which is obtained from $C_2$ in Lemma \ref{exchange vertices} by changing $v_2$ and $v_3$ in $C_2$, and} consider $C = w_1\ldots w_k v_3v_2v_4 \ldots v_{\delta} v_0 v_1w_{k+1} \ldots w_{n-\delta-1}w_1$, which is obatined from $C_1$ by deleting the arc between $w_k$ and $w_{k+1}$, and adding the arc between $w_k$ and $v_3$, the arc between $w_{k+1}$ and $v_1$, and the longer oriented path of $C'_2$ from $v_3$ to $v_1$. \QGr{If not, then both of $w_k$ and $w_{k+1}$ are adjacent to $v_{\delta-2}$ and $v_{\delta}$, let $C'_2$ be an oriented cycle which is obtained from $C_2$ in Lemma \ref{exchange vertices} by changing $v_{\delta-2}$ and $v_{\delta-1}$ in $C_2$, and} consider $C = w_1\ldots w_k v_{\delta} v_0 v_1 \ldots v_{\delta-3}v_{\delta-1}v_{\delta-2} w_{k+1} \ldots w_{n-\delta-1}w_1$, which is obtained from $C_1$ by deleting the arc between $w_k$ and $w_{k+1}$, and adding the arc between $w_k$ and $v_{\delta}$, the arc between $w_{k+1}$ and $v_{\delta-2}$, and the longer oriented path of $C'_2$ from $v_{\delta}$ to $v_{\delta-2}$. \QGr{For both cases, we have $\sigma^+(C_2')\ge \sigma^+(C_2)-3\ge \delta - 3$ by Lemma \ref{exchange vertices}, and then} $\sigma_{\max}(C) \ge \sigma^+(C)\ge n-2\delta +2 - 1 + \delta -3=n-\delta -2$.

   If $\delta(D[W])\ge n-2\delta+3$, then assume that the vertices in $V(C_2)$ which are adjacent to $w_{k+1}$ are $\{v_{j_1},\ldots,v_{j_a}\}$, in which $a \ge 2$ and $j_1<j_2<\dots<j_a$. Then $w_k$ is not adjacent to $v_{j_1+1}$ by the assumptions before. Let $v_{i_1}$ be a vertex in $V(C_2)\setminus \{v_0\}$ that is adjacent to $w_k$. Without loss of generality, assume $i_1< j_1-1$. Let $C'_2$ be an oriented cycle which is obtained from $C_2$ in Lemma \ref{exchange vertices}, by changing $v_{i_1}$ and $v_{j_1+1}$ in $C_2$. By Lemma~\ref{exchange vertices}, $\sigma^+(C_2')\ge \delta - 4$. Consider $C' = w_1\ldots w_k v_{i_1}v_{j_1+2}v_{j_1+3}\dots v_{\delta}v_0v_1\dots v_{i_1-1}v_{j_1+1}v_{i_1+1}\ldots v_{j_1} w_{k+1} \ldots w_{n-\delta-1}w_1$,  which is obtained from $C_1$ by deleting the arc between $w_k$ and $w_{k+1}$, and adding the arc between $w_k$ and $v_{i_1}$, the arc between $w_{k+1}$ and $v_{j_1}$, and the longer path of $C'_2$ from $v_{i_1}$ to $v_{j_1}$. Thus, $\sigma_{\max}(C) \ge \sigma^+(C)\ge n-2\delta +3 - 1 + \delta -4=n-\delta -2$.
     \end{proof}

\section{Proof of Theorem \ref{thm:approx}}\label{sec4}

 The following lemma informs us of a lower bound on the size $a(D)$ of the arc set of $D$. For an undirected graph $G$, let $e(G)=|E(G)|.$
	
	\begin{lemma}\label{lem:1}
		Let $D$ be an oriented graph and $t$ a non-negative integer. If $s^*(D)\geq t$, then $a(D)\geq \frac{n(n+t)}{4}$.
	\end{lemma}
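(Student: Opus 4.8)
The plan is to bound the number of arcs of $D$ from below by translating the condition $s^*(D)\ge t$ into a vertex-degree statement and then summing degrees, handling the cases of tournaments and non-tournaments separately.

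First I would dispose of the case where $D$ is a tournament: then $a(D)=\binom{n}{2}=\frac{n(n-1)}{2}$, and since $s^*(D)=n-2\ge t$ we have $n\ge t+2$, so $\frac{n(n-1)}{2}\ge \frac{n(n+t)}{4}$ reduces to $2(n-1)\ge n+t$, i.e. $n\ge t+2$, which holds. So assume $D$ has at least one pair of non-adjacent vertices; then the hypothesis $s^*(D)\ge t$ says precisely that $d(u)+d(v)\ge n+t$ for every non-adjacent pair $u,v$.

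The core step is the following degree inequality: for every vertex $x\in V(D)$, $d(x)\ge \frac{n+t}{2}$, OR $x$ is adjacent to every other vertex (in which case $d(x)=n-1\ge \frac{n+t}{2}$ anyway, since $n-1\ge\frac{n+t}{2}\iff n\ge t+2$, and $n\ge t+2$ follows from the existence of a non-adjacent pair together with $d(u)+d(v)\ge n+t$ and $d(u),d(v)\le n-1$). More carefully: if $x$ is non-adjacent to some $y$, consider a vertex $w$ of minimum degree $\delta(D)$. If $w$ has a non-neighbour $z$, then $\delta(D)+d(z)\ge n+t$ forces $d(z)\ge n+t-\delta(D)$; pairing up shows every vertex not adjacent to the minimum-degree vertex has large degree, and one argues that in fact every vertex $x$ satisfies $d(x)\ge\frac{n+t}{2}$. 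The cleanest route: take $x$ with $d(x)=\delta(D)$; if $x$ is adjacent to all others then $\delta(D)=n-1\ge\frac{n+t}{2}$; otherwise $x$ has a non-neighbour $y$, and $d(x)+d(y)\ge n+t$ with $d(y)\le n-1$ gives $d(x)\ge t+1$... which is not yet $\frac{n+t}{2}$. So instead I would argue directly on the sum $\sum_x d(x)=2a(D)$ by a global counting argument rather than a per-vertex bound.

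The global argument I would actually carry out: let $\overline{D}$ be the graph on $V(D)$ whose edges are the non-adjacent pairs of $D$, with edge count $m=e(\overline{D})$, so $a(D)=\binom{n}{2}-m$. For each non-edge $\{u,v\}$ of $D$ (edge of $\overline{D}$) we have $d_D(u)+d_D(v)\ge n+t$, equivalently $d_{\overline{D}}(u)+d_{\overline{D}}(v)\le 2(n-1)-(n+t)=n-2-t$. Summing this over all edges of $\overline{D}$ gives $\sum_{\{u,v\}\in E(\overline{D})}(d_{\overline{D}}(u)+d_{\overline{D}}(v))\le m(n-2-t)$; the left side equals $\sum_{v}d_{\overline{D}}(v)^2\ge \frac{(\sum_v d_{\overline{D}}(v))^2}{n}=\frac{(2m)^2}{n}$ by Cauchy–Schwarz. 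Hence $\frac{4m^2}{n}\le m(n-2-t)$, so either $m=0$ or $m\le \frac{n(n-2-t)}{4}$; in both cases $m\le\frac{n(n-2-t)}{4}$ (the $m=0$ case needing $n\ge t+2$, which I get as above from the existence of a non-adjacent pair). Therefore $a(D)=\binom{n}{2}-m\ge \frac{n(n-1)}{2}-\frac{n(n-2-t)}{4}=\frac{2n(n-1)-n(n-2-t)}{4}=\frac{n(n+t)}{4}$, as required. The main obstacle is getting the case $m=0$ (i.e. $D$ a tournament) and the edge-case small $n$ handled cleanly so that $n\ge t+2$ is available exactly when needed; the Cauchy–Schwarz step itself is routine.
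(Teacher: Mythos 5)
Your proof is correct and follows essentially the same route as the paper's: pass to the complement graph, bound $\sum_{uv\in E(\overline{G})}(d_{\overline{G}}(u)+d_{\overline{G}}(v))$ above by $e(\overline{G})(n-2-t)$ using the degree-sum hypothesis and below by $4e(\overline{G})^2/n$ via Cauchy--Schwarz, and conclude $e(\overline{G})\le n(n-2-t)/4$. Your explicit handling of the tournament/$e(\overline{G})=0$ case (where one cannot divide by $e(\overline{G})$ and needs $n\ge t+2$ from the definition of $s^*$) is a small point the paper glosses over, but otherwise the arguments coincide.
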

	\begin{proof}
		As $D$ is an oriented graph, we only need to show that the underlying graph $G$ of $D$ has at least $\frac{n(n+t)}{4}$ edges. Consider the complement $\overline{G}$ of $G$. On the one hand, since $s^*(D)\geq t$, we have that
		\begin{eqnarray}\label{eqn:1}
			\sum_{uv\in E(\overline{G})} (d_{\overline{G}}(u)+d_{\overline{G}}(v))&=&\sum_{uv\not\in E(G)}(2n-2-(d_{G}(u)+d_{G}(v)))\nonumber\\
&\leq& e(\overline{G})(n-2-t).
		\end{eqnarray}
		
		On the other hand, by Cauchy-Schwarz inequality, we have that
		\begin{equation}\label{eqn:2}
			\sum_{uv\in E(\overline{G})} (d_{\overline{G}}(u)+d_{\overline{G}}(v))=\sum_{u\in V(\overline{G})} d_{\overline{G}}^2(u)\geq \frac{1}{n}\left(\sum_{u\in V(\overline{G})} d_{\overline{G}}(u)\right)^2=\frac{4e^2(\overline{G})}{n}.
		\end{equation}
		
		By (\ref{eqn:1}) and (\ref{eqn:2}), we have
		\(e(\overline{G})\leq n(n-2-t)/4,\)
		and therefore $e(G)={n\choose 2}-e(\overline{G})\geq \frac{n(n+t)}{4}$, which completes the proof.
	\end{proof}

    We call an undirected graph $G$ a \emph{diamond}, if $G$ is isomorphic to $K_4^-$, where $K_4^-$ is obtained from $K_4$ by removing one edge. Let $V(G)=\{a,b,c,d\}$, $d(a)=d(b)=3$ and $d(c)=d(d)=2$. Given an orientation of $G$, it is a \emph{good diamond} if \QGr{the following two conditions hold: (i) either both $c$ and $d$ have arcs directed towards $a$, or $a$ has arcs directed towards both $c$ and $d$; (ii) either both $c$ and $d$ have arcs directed towards $b$, or $b$ has arcs directed towards both $c$ and $d$} (see Figure \ref{fig1}). Let $D$ be a good diamond. Note that there are oriented paths $P_{cd}$ and $P_{dc}$ of length 3 from $c$ to $d$ and from $d$ to $c$ respectively in $D$, such that $\sigma^+(P_{cd})\ge 2$ and $\sigma^+(P_{dc})\ge 2$.

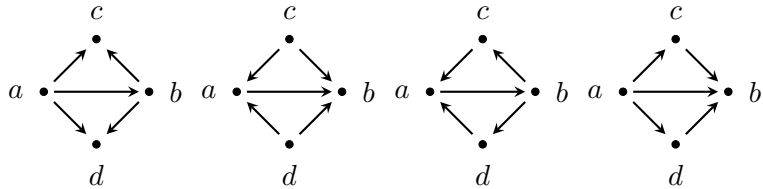
\begin{figure}[htp]
    \centering

    \begin{minipage}[t]{0.19\linewidth}
        \vspace{0pt}
        \centering
        \begin{tikzpicture}[scale=0.7]
            \filldraw[black](0,1) circle (2pt)node[label = left:$a$](a){};
            \filldraw[black](2,1) circle (2pt)node[label = right:$b$](b){};
            \filldraw[black](1,2) circle (2pt)node[label = above:$c$](c){};
            \filldraw[black](1,0) circle (2pt)node[label = below:$d$](d){};

            \foreach \i/\j/\t in {
            a/c/0,
            a/b/0,
            a/d/0,
            b/d/0,
            b/c/0
            }{\path[-stealth, line width = 0.8] (\i) edge[bend left = \t] (\j);}
        \end{tikzpicture}
    \end{minipage}
\begin{minipage}[t]{0.19\linewidth}
        \vspace{0pt}
        \centering
        \begin{tikzpicture}[scale=0.7]
            \filldraw[black](0,1) circle (2pt)node[label = left:$a$](a){};
            \filldraw[black](2,1) circle (2pt)node[label = right:$b$](b){};
            \filldraw[black](1,2) circle (2pt)node[label = above:$c$](c){};
            \filldraw[black](1,0) circle (2pt)node[label = below:$d$](d){};

            \foreach \i/\j/\t in {
            c/a/0,
            a/b/0,
            d/a/0,
            d/b/0,
            c/b/0
            }{\path[-stealth, line width = 0.8] (\i) edge[bend left = \t] (\j);}
        \end{tikzpicture}
    \end{minipage}
    \begin{minipage}[t]{0.19\linewidth}
        \vspace{0pt}
        \centering
        \begin{tikzpicture}[scale=0.7]
            \filldraw[black](0,1) circle (2pt)node[label = left:$a$](a){};
            \filldraw[black](2,1) circle (2pt)node[label = right:$b$](b){};
            \filldraw[black](1,2) circle (2pt)node[label = above:$c$](c){};
            \filldraw[black](1,0) circle (2pt)node[label = below:$d$](d){};

            \foreach \i/\j/\t in {
            c/a/0,
            a/b/0,
            d/a/0,
            b/d/0,
            b/c/0
            }{\path[-stealth, line width = 0.8] (\i) edge[bend left = \t] (\j);}
        \end{tikzpicture}
    \end{minipage}
    \begin{minipage}[t]{0.19\linewidth}
        \vspace{0pt}
        \centering
        \begin{tikzpicture}[scale=0.7]
            \filldraw[black](0,1) circle (2pt)node[label = left:$a$](a){};
            \filldraw[black](2,1) circle (2pt)node[label = right:$b$](b){};
            \filldraw[black](1,2) circle (2pt)node[label = above:$c$](c){};
            \filldraw[black](1,0) circle (2pt)node[label = below:$d$](d){};

            \foreach \i/\j/\t in {
            a/c/0,
            a/b/0,
            a/d/0,
            d/b/0,
            c/b/0
            }{\path[-stealth, line width = 0.8] (\i) edge[bend left = \t] (\j);}
        \end{tikzpicture}
    \end{minipage}

    \caption{The four good diamonds}
    \label{fig1}
\end{figure}
    	
    \QGr{A path forest is a graph in which every component is a path.} The following lemmas can be found in \cite{GKM23}. While the second lemma is a slight generalization of the result by P\'osa~\cite{P63}, it can be shown by slightly modifying his proof for graphs \QGn{$G$} with $\delta(G)\geq (n+k)/2$. However, for the sake of completion, we provide a proof here.
	
	\begin{lemma}[\cite{GKM23}]\label{lem:2}
		Let $k\geq 1$, and let $G$ be a graph with $|V(G)|=n\geq 30+4(k-1)$ and $e(G)\geq \frac{n^2}{4} + 2(k-1)n - 4k^2 +6k-1$. Then, every orientation of $G$ contains $k$ vertex-disjoint good diamonds.
	\end{lemma}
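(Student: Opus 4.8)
The plan is to induct on $k$: fix an arbitrary orientation $D$ of $G$, extract one good diamond $Q$ from $D$, delete its four vertices, and apply the induction hypothesis to what remains. The edge threshold in the statement is chosen precisely so that this deletion leaves enough edges for the case $k-1$; the whole argument is driven by a tight bookkeeping identity.

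\emph{Finding one good diamond.} First one checks that $e(G)>n^2/4$: when $k=1$ the hypothesis reads $e(G)\ge n^2/4+1$, and in general, using $n\ge 4(k-1)$, we have $2(k-1)n-4k^2+6k-1\ge 8(k-1)^2-4k^2+6k-1=4k^2-10k+7>0$. By the classical book theorem (conjectured by Erd\H{o}s and proved by Edwards and by Khad\v{z}iivanov and Nikiforov), a graph on $m$ vertices with more than $m^2/4$ edges contains an edge lying in at least $\lceil m/6\rceil$ triangles; with $m=n\ge 30$ this produces an edge $ab$ of $G$ with at least $5$ common neighbours. Each such common neighbour $w$ has one of four \emph{types}, recording the orientations of $aw$ and $bw$ in $D$; by pigeonhole two common neighbours $c,d$ share a type, and then $\{a,b,c,d\}$ carries a good diamond $Q$ of $D$.

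\emph{The deletion estimate and the induction.} The key claim is that deleting $V(Q)=\{a,b,c,d\}$ from $G$ destroys at most $4n-10$ edges. The number destroyed equals $\sum_{v\in V(Q)}d_G(v)-e(G[V(Q)])$. If $cd\in E(G)$, then $e(G[V(Q)])=6$, so the count is at most $4(n-1)-6=4n-10$; if $cd\notin E(G)$, then $e(G[V(Q)])=5$ but now $d_G(c),d_G(d)\le n-2$, so the count is at most $2(n-1)+2(n-2)-5=4n-11$. (It is essential to bound $\sum_v d_G(v)$ and $e(G[V(Q)])$ jointly: treating them separately only gives $4(n-1)-5=4n-9$, which is one too many.) Now set $G'=G-V(Q)$, with the orientation inherited from $D$. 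Then $|V(G')|=n-4\ge 30+4(k-2)$, and by the claim together with a routine rearrangement of the hypothesis on $e(G)$,
\[
e(G')\;\ge\;e(G)-(4n-10)\;\ge\;\frac{(n-4)^2}{4}+2(k-2)(n-4)-4(k-1)^2+6(k-1)-1 .
\]
Hence $G'$ satisfies the hypotheses of the lemma with $k$ replaced by $k-1$, so by induction it contains $k-1$ vertex-disjoint good diamonds; adjoining $Q$ yields $k$ of them in $D$ (the base case $k=1$ being the previous paragraph, which needs no deletion).

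\emph{Main obstacle.} The difficulty lies entirely in the tightness of the bookkeeping: the constant $4n-10$ in the deletion claim and the threshold $n^2/4+2(k-1)n-4k^2+6k-1$ dovetail with essentially zero slack, so the deletion estimate must be established in the sharp form above. The only ingredient that is not elementary is the book theorem used to find the first good diamond; a pure double-counting bound (comparing the number of $2$-edge paths carried by edges with those carried by non-edges) is too weak, ruling out a good diamond only once $e(G)$ exceeds roughly $0.31\,n^2$, so some structural input of book type is genuinely required here.
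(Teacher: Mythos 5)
The paper does not prove this lemma at all: it is quoted verbatim from \cite{GKM23}, so there is no in-paper argument to compare against. Judged on its own, your proof is correct. The two places where it could go wrong are exactly the ones you flag, and both check out: (i) the joint bound $\sum_{v\in V(Q)}d_G(v)-e(G[V(Q)])\le 4n-10$ is right in both cases ($cd\in E(G)$ gives $4(n-1)-6$; $cd\notin E(G)$ gives $2(n-1)+2(n-2)-5$), and (ii) expanding $\frac{(n-4)^2}{4}+2(k-2)(n-4)-4(k-1)^2+6(k-1)-1$ gives $\frac{n^2}{4}+2kn-6n-4k^2+6k+9$, which equals $e(G)-(4n-10)$ under the hypothesis, so the induction closes with zero slack (likewise $n-4\ge 30+4(k-2)$ is tight). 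The base step is also sound: $2(k-1)n-4k^2+6k-1\ge 4k^2-10k+7>0$ ensures $e(G)>n^2/4$, the Edwards/Khad\v{z}iivanov--Nikiforov book theorem then yields an edge in at least $n/6\ge 5$ triangles, and the pigeonhole over the four orientation types of a common neighbour of $a,b$ is precisely the definition of a good diamond. This is, as far as I can tell, the same induction-plus-book-theorem argument used in \cite{GKM23} itself; the only external ingredient you invoke (the book theorem) is a genuine, standard result, correctly applied.
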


	\begin{lemma}\label{lem:3}
		Let $t\geq 0$ and let $G$ be a graph with $n$ vertices and $s^*(G)\geq t$. Let $E\subseteq E(G)$ be the edge set of a path forest of size at most $t$. Then, there exists a Hamilton cycle in G which uses all edges in $E$.
	\end{lemma}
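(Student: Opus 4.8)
The plan is to prove the contrapositive in the spirit of P\'osa's proof of Ore's theorem, using a crossing-chords count that is sensitive to the forced edges of $F$. Recall that for an undirected graph, $s^*(G)\ge t$ just says that $d_G(u)+d_G(v)\ge n+t$ for every pair $u\neq v$ of non-adjacent vertices (with the usual convention when $G$ is complete), and that adding edges to $G$ only increases degrees; hence passing to a supergraph of $G$ on the same vertex set preserves both $F\subseteq E(G)$ and $s^*(G)\ge t$. So, assuming the lemma fails, I would fix a graph $G$ on the given $n$ vertices that is \emph{edge-maximal} subject to the three conditions: $F\subseteq E(G)$, $s^*(G)\ge t$, and $G$ has no Hamilton cycle containing every edge of $F$.

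First I would note that $G$ is not complete: a linear (path) forest on $n$ vertices is always a subgraph of some Hamilton cycle of $K_n$ (concatenate its path-components cyclically, using an arbitrary edge to close up), so $K_n$ is never a counterexample. Hence $G$ has a pair $u,v$ of non-adjacent vertices. By edge-maximality, $G+uv$ has a Hamilton cycle $C$ with $F\subseteq E(C)$; since $uv\notin E(G)\supseteq F$ and $G$ itself has no such cycle, $C$ must use $uv$, and deleting $uv$ from $C$ produces a Hamilton path $P=w_1w_2\cdots w_n$ of $G$ with $w_1=u$, $w_n=v$, and $F\subseteq E(P)$.

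The heart of the argument is then a P\'osa-style rotation count. Set $A=\{\,i: w_1w_{i+1}\in E(G)\,\}$ and $B=\{\,i: w_iw_n\in E(G)\,\}$. Because $w_1w_n\notin E(G)$, both $A$ and $B$ lie in $\{1,\dots,n-1\}$, with $|A|=d_G(w_1)$ and $|B|=d_G(w_n)$, so $|A\cap B|\ge d_G(w_1)+d_G(w_n)-(n-1)\ge (n+t)-(n-1)=t+1$. For each $i\in A\cap B$ the rotation $C_i=w_1w_2\cdots w_i\,w_nw_{n-1}\cdots w_{i+1}\,w_1$ is a Hamilton cycle of $G$ (all of its edges are edges of $P$ except $w_iw_n$ and $w_{i+1}w_1$, which lie in $E(G)$ since $i\in A\cap B$), and $C_i$ contains every edge of $P$ except $w_iw_{i+1}$. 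If $w_iw_{i+1}\notin F$, then $C_i$ is a Hamilton cycle of $G$ through $F$, contradicting the choice of $G$; hence $w_iw_{i+1}\in F$ for all $i\in A\cap B$. Distinct indices yield distinct edges of $P$, so $|F|\ge|A\cap B|\ge t+1$, contradicting $|F|\le t$. This contradiction proves the lemma.

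The one point requiring care — and the reason a naive reduction such as contracting an edge of $F$ is awkward — is the interaction between rotations and the forced edges: a rotation at index $i$ destroys exactly the path edge $w_iw_{i+1}$, so it is ``legal'' precisely when that edge is not in $F$, and the whole proof rests on turning this restriction into the statement ``$F$ must contain all $\ge t+1$ edges $w_iw_{i+1}$ with $i\in A\cap B$.'' It is also worth checking the index bookkeeping ($A\cap B\subseteq\{2,\dots,n-2\}$, so each $w_iw_{i+1}$ really is an interior edge of $P$) and that the degenerate cases cause no trouble: $t=0$ simply recovers Ore's theorem, and small $n$ need no separate treatment since complete graphs are never counterexamples (here, as is standard, $n\ge 3$ is assumed).
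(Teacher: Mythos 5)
Your proof is correct and follows essentially the same route as the paper's: take an edge-maximal counterexample, extract a Hamilton path $P$ with $F\subseteq E(P)$ between a non-adjacent pair, and run a P\'osa-type crossing count showing that every "legal" rotation position is blocked by a forced edge, so $|F|\ge d(w_1)+d(w_n)-(n-1)\ge t+1$, a contradiction. The paper phrases the count as a sum of adjacency indicators over the edges of $P$ split into forced and non-forced edges, while you count $|A\cap B|$ directly by inclusion--exclusion, but these are the same double count.
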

	\begin{proof}
  Suppose $G$ is a maximal counterexample to this lemma, which means by adding any new edge $uv$, the lemma holds for $G+uv$. Note that $G$ cannot be a complete graph as otherwise this lemma would have held for $G$. Let $v_1$ and $v_n$ be a pair of non-adjacent vertices. As the lemma holds for $G+v_1v_n$ and not for $G$, there is a Hamilton cycle $C=v_1v_2\dots v_nv_1$ where $E\subseteq E(C)$. Let $P$ be the Hamilton path $P=C-\{v_1v_n\}$ in $G$.

For a property $\cal P$, let $I({\cal P})$ be the {\em indicator} of $\cal P$ i.e.  $I({\cal P})=1$ if  $\cal P$ holds and $I({\cal P})=0$, otherwise. For every $i\in [n]$, let $I(v_1v_i)$ and $I(v_nv_i)$ be the indicators for whether $v_1v_i\in E(G)$ and $v_nv_i\in E(G)$ respectively. Let $S=E(P)\setminus E$. Thus, $|S|=n-1-|E|$. Note that for every $v_{i-1}v_{i}\in S$, we have that
    \begin{equation}\label{eq:3}
    	I(v_1v_{i})+I(v_nv_{i-1})\leq 1,
    \end{equation}
    as it is obvious if $i=2$ or $n$ (since $I(v_1v_n)=0$) and if $i\in \{3, 4, \dots, n-1\}$ and $I(v_1v_{i})+I(v_nv_{i-1})= 2$, then $v_1v_iv_{i+1}\dots v_n v_{i-1} v_{i-2}\dots v_1$ is a Hamilton cycle containing all edges in $E$, a contradiction. Thus, on the one hand, by (\ref{eq:3}), we have that
    \begin{eqnarray}\label{eq:4}
    \sum_{v_{i-1}v_i\in E(P)} (I(v_1v_i)+I(v_nv_{i-1})) \leq |S|+2|E|=n+|E|-1\leq n+t-1.
     \end{eqnarray}
     On the other hand,
      \begin{eqnarray*}
   	\sum_{v_{i-1}v_i\in E(P)} (I(v_1v_i)+I(v_nv_{i-1})) &=&\sum_{i=1}^{n-1} I(v_nv_i) +\sum_{i=2}^{n} I(v_1v_i)\\
&=&d(v_n)+d(v_1)\geq n+t,
\end{eqnarray*}
which contradicts (\ref{eq:4}). This completes the proof.
	\end{proof}
	
	Now we give the proof of Theorem \ref{thm:approx}.\\
	
	\QG{\noindent{\bf Theorem \ref{thm:approx}.} \textit{For any integer $k\geq 0$ and oriented graph $D$ with order $n\geq 30+4(k-1)$. If $s^*(D)\geq 8k$, then there is a Hamilton oriented cycle $C$ in $D$ such that $\sigma_{\max}(C)\geq \lceil\frac{n+k}{2}\rceil$.}}	
			
	\begin{proof}
	    Let $D$ be an oriented graph with order $n\ge 30+4(k-1)$ and $s^*(D)\geq 8k$. By Lemma \ref{lem:1}, we have
		
		\[a(D)\geq \frac{n(n+8k)}{4}\geq  \frac{n^2}{4} + 2(k-1)n - 4k^2 +6k-1.\]
Thus, by Lemma \ref{lem:2}, $D$ contains $k$ vertex-disjoint good diamonds. Let $c_i$ and $d_i$ be the two vertices with degree $2$ on diamond $i$, and $P_i$ and $Q_i$ oriented  paths of length 3 from $c_i$ to $d_i$ and $d_i$ to $c_i$ respectively, satisfying $\sigma^+(P_i)\ge 2$ and $\sigma^+(Q_i)\ge 2$. By Lemma \ref{lem:3}, there is a Hamilton oriented cycle $C$ containing all arcs in $P_i$ (for all $i\in [k]$). Without loss of generality, we assume that most of the arcs in $A(C)\setminus (\cup_{i=1}^k A(P_i))$ \QGr{are forward}. Then, we can replace some $P_i$ by $Q_i$ with most of the arcs \QGr{are forward} if necessary. Thus, there are at least $\frac{n-3k}{2}+2k\geq \frac{n+k}{2}$ arcs \QGr{are forward} and therefore $\sigma_{\max}(C)\geq \frac{n+k}{2}$.
 	\end{proof}	
	
\section{Further research of digraph discrepancy}\label{sec:further}

\GGr{Guo et al. \cite{Guo+}  took the  study of digraph discrepancy to a different direction: they investigated the maximum number of forward arcs in Hamilton oriented paths and cycles in digraphs in some classes of generalizations of tournaments. For semicomplete multipartite digraphs and locally semicomplete digraphs, they proved related characterizations leading to polynomial-time algorithms for finding Hamilton oriented paths and cycles with the maximum number of forward arcs. A digraph is {\em semicomplete multipartite} if it can be obtained from a complete multipartite graph by replacing every edge with an arc with the same end-vertices. A digraph is {\em semicomplete} if it is semicomplete multipartite in which there is only one vertex in each colour class. 
A digraph is {\em locally semicomplete}  if the out-neighbourhood and in-neighbourhood of every vertex induce semicomplete digraphs. 
Guo et al. \cite{Guo+}  also observed that for several other classes of digraphs the problems of maximizing the number of forward arcs in Hamilton oriented cycles
are NP-hard since even the problems of determining the existence of Hamilton oriented  cycles in those digraphs are NP-hard. }

\section*{Acknowledgements} JA is partially supported by the National Natural Science Foundation of China (No.12401456, No.12522117) and the Natural Science Foundation of Tianjin (No.24JCQNJC01960). QG is partially supported by China Scholarship Council (No.202406200159).


\begin{thebibliography}{99}

{\small

\bibitem{BCJP} J. Balogh, B. Csaba, Y. Jing and A. Pluh{\'a}r, On the discrepancies of graphs, Electron. J. Comb. 27 (2020) P 2.12.

\bibitem{BCPT} J. Balogh, B. Csaba, A. Pluh{\'a}r and A. Treglown, A discrepancy version of the Hajnal–Szemer{\'e}di theorem, Comb. Probab. Comput. 30 (2021) 444--459.



\bibitem{BG00} J. Bang-Jensen and G. Gutin, Digraphs--theory, algorithms and applications, 2nd Ed., Springer, London, 2009.

\bibitem{BG18} J. Bang-Jensen and G. Gutin, Basic Terminology, Notation and Results, in: Classes of Directed Graphs (J. Bang-Jensen and G. Gutin, eds.), Springer, London, 2018.


 \bibitem{BM08} J.A. Bondy and U.S.R. Murty, Graph Theory, Springer, N.Y., 2008.


\bibitem{D52} G.A. Dirac, Some theorems on abstract graphs, Pros. Lond. Math. Soc. 2 (1952) 69--81.

\bibitem{E63} P. Erd{\H o}s, Ramsey {\'e}s Van der Waerden t{\'e}tel{\'e}vel Kapcsolatos Kombinatorikai K{\'e}rd{\'e}sekr{\H o}l, Mat. Lapok 14 (1963) 29--37.

\bibitem{FL24} A. Freschi and  A. Lo, An oriented discrepancy version of Dirac’s theorem,  J. Combin. Theory Ser. B 169 (2024) 338--351.

\bibitem{FHLT} A. Freschi, J. Hyde, J. Lada and A. Treglown, A note on colour-bias Hamilton cycles in dense graphs, SIAM J. Discrete Math. 35 (2021) 970--975.


\bibitem{GKM22a} L. Gishboliner, M. Krivelevich and P. Michaeli, Colour-biased Hamilton cycles in random graphs, Random Struct. Algorithms 60 (2022) 289--307.

\bibitem{GKM22b} L. Gishboliner, M. Krivelevich and P. Michaeli, Discrepancies of spanning trees and Hamilton cycles, J. Comb. Theory, Ser. B 154 (2022) 262--291.

\bibitem{GKM23} L. Gishboliner, M. Krivelevich, and P. Michaeli, Oriented discrepancy of Hamilton cycles, J. Graph Th. 103 (2023) 780--792.






\bibitem{Guo+} Q. Guo, G. Gutin, Y. Lan, Q. Shao, A. Yeo and Y. Zhou,  Forward Arc Maximization for Hamilton Oriented Cycles and Paths in Generalizations of Tournaments, submitted. 

\bibitem{M99} J. Matou{\v s}ek, Geometric discrepancy, Springer, Berlin, 1999.


\bibitem{O60} O. Ore, Note on Hamiltonian circuits, Amer. Math. Mon. 67 (1960) 55.


\bibitem{P63} L. P{\'o}sa, On the circuits of finite graphs, A Magyar Tudomanyos Akademia Matematikai Kutat{\'o} Intezetenek K{\"o}zlem{\'e}nyei 8 (1963) 355--361.




}


\end{thebibliography}
\end{document}